\newtheorem{theorem}{Theorem}[section]
\newtheorem{lemma}{Lemma}[section]
\newtheorem{definition}{Definition}[section]
\newtheorem{remark}{Remark}[section]
\newtheorem{example}{Example}[section]
\newcommand{\beq}{\begin{equation}}
\newcommand{\eeq}{\end{equation}}
\newcommand{\beqn}{\begin{eqnarray}}
\newcommand{\eeqn}{\end{eqnarray}}
\begin{document}

\title{Almost automorphic funtions on time scales  and   almost automorphic solutions to  shunting inhibitory cellular neural networks   on time scales\thanks{This work is supported by
the National Natural Sciences Foundation of People's Republic of
China under Grant 11361072.} }
\author{ Yongkun Li\thanks{%
The corresponding author.}, Bing Li and Xiaofang Meng\\
%EndAName
Department of Mathematics, Yunnan University\\
Kunming, Yunnan 650091\\
People's Republic of China}

\date{}
\maketitle \allowdisplaybreaks
\begin{abstract}
In this paper, we first propose a new concept of almost periodic time scales,   a new definition of almost
automorphic functions on almost periodic time scales, and  study some   their basic properties.
Then we prove a result ensuring the existence of an almost automorphic solution for both the linear nonhomogeneous dynamic equation on time scales and its associated homogeneous equation, assuming that the  associated homogeneous equation  admits an exponential dichotomy.
Finally, as an application of our results, we establish  the existence and global exponential stability of almost automorphic solutions to a class of shunting inhibitory cellular neural networks  with time-varying delays on time scales. Our results about the shunting inhibitory cellular neural network  with time-varying delays on time scales are new even for the both cases of differential equations(the time scale $\mathbb{T}=\mathbb{R})$  and difference equations(the time scale $\mathbb{T}=\mathbb{Z})$.
\end{abstract}

\textbf{Key words:} Time scales; Almost automorphic functions; Dynamic equations; Exponential dichotomy.\\
\textbf{MSC2010:} 34N05; 34K14; 43A60; 92B20.

\allowdisplaybreaks
\section{Introduction}

\setcounter{section}{1}
\setcounter{equation}{0}
\indent

The theory of time scales was initiated by  Hilger \cite{5} in his Ph.D. thesis in 1988, which can   unify the continuous and discrete cases.
The theory of dynamic equations on time scale contains, links and extends the classical theory
of differential and difference equations. In the recent years, there has been an increasing interest in studying
 the existence of periodic solutions and almost periodic solutions of various dynamic equations on time scales; we
refer the reader to the papers [2-23].

The concept of almost automorphy  was introduced in the literature by S. Bochner in
1955 in the context of differential geometry \cite{b1} (see also Bochner \cite{b2,b3}). Since
then, this concept has been extended in various directions.

\begin{remark}Although every almost periodic function is almost automorphic,
the converse, however, is not true.
\end{remark}

In order to study  almost periodic, pseudo almost periodic  and almost automorphic dynamic equations on time scales, the following  concept of
almost periodic time scales was proposed in \cite{9}.
\begin{definition}\label{da1}\cite{9} A time scale $\mathbb{T}$ is called an almost periodic time scale if
\begin{eqnarray*}
\Pi=\big\{\tau\in\mathbb{R}: t\pm\tau\in\mathbb{T}, \forall t\in{\mathbb{T}}\big\}\neq\{0\}.
\end{eqnarray*}
\end{definition}
 Based on Definition \ref{da1}, almost periodic
functions \cite{9}, pseudo almost periodic functions \cite{b31}, almost automorphic functions \cite{b32,b33} and weighted piecewise pseudo almost automorphic functions \cite{b34} on time scales were defined  successfully.
For example, the authors of \cite{b33}  proposed the following concept of almost automorphic functions on time scales.
\begin{definition}\label{da2}\cite{b33}
Let $\mathbb{T}$ be an almost periodic time scale and $\mathbb{X}$ be a Banach space. A bounded continuous function $f:\mathbb{T}\rightarrow\mathbb{X}$ is said to be almost automorphic, if for every sequence $(s_{n}^{'})\subset \Pi$, there is a subsequence $(s_{n})\subset(s_{n}^{'})$ such that
    \[\lim\limits_{n\rightarrow\infty}f(t+s_{n})=\bar{f}(t)\]
    is well defined for each $t\in {\mathbb{T}}$,  and
    \[\lim\limits_{n\rightarrow\infty}\bar{f}(t-s_{n})=f(t)\]
    for each $t\in {\mathbb{T}}$.

    A continuous function $f: \mathbb{T}\times \mathbb{X}\rightarrow \mathbb{X}$ is said to be almost automorphic if $f(t,x)$ is almost automorphic in $t\in \mathbb{T}$ uniformly in $x\in B$, where $B$ is any bounded subset of $\mathbb{X}$.
\end{definition}
Since the concept of almost periodic time scales in sense of Definition \ref{da1}
is very restrictive in the sense that it is a kind of periodic time scale (see \cite{kf}). This excludes many interesting time scales.
Therefore, it is a challenging and important problem in theories and applications to find a
new concept of almost periodic time scales.

Motivated by the above discussions, our main aim of this paper is  to propose a new concept of almost periodic time scales and a new definition of almost
automorphic functions on almost periodic time scales, and  to study  the existence of an almost automorphic solution for both the linear nonhomogeneous dynamic equation on time scales and its associated homogeneous equation.
As an application of our results,   the existence and global exponential stability of almost automorphic solutions to a class of shunting inhibitory cellular neural networks  with time-varying delays on time scales is established.

The organization of this paper is as follows: In Section 2, we introduce some notations and definitions and state some preliminary results which are needed
in later sections. In Section 3, we first introduce a new concept of almost time scales and a new definition of almost automorphic functions on time scales, then we discuss some of their properties. Finally, we propose two open problems.
  In Section 4,  we prove a result ensuring the existence of an almost automorphic solution for both the linear nonhomogeneous dynamic equation on time scales and its associated homogeneous equation, assuming that the  associated homogeneous equation  admits an exponential dichotomy. In Section 5, as an application of our results, we establish  the existence and global exponential stability of almost automorphic solutions to a class of shunting inhibitory cellular neural networks  with time-varying delays on time scales. Finally, we draw a conclusion in Section 6.

\section{Preliminaries}

\setcounter{section}{2}
\setcounter{equation}{0}
\indent

In this section, we shall first recall some fundamental definitions, lemmas which are used in what follows.

A time scale $\mathbb{T}$
is an arbitrary nonempty closed subset of the real set $\mathbb{R}$ with the topology and ordering inherited from $\mathbb{R}$. The
forward jump operator
$\sigma:\mathbb{T}\rightarrow\mathbb{T}$ is defined by $\sigma(t)=\inf\big\{s\in \mathbb{T},s>t\big\}$ for all $t\in \mathbb{T}$,
while the backward jump operator $\rho:\mathbb{T}\rightarrow\mathbb{T}$ is defined by
$\rho(t)=\sup\big\{s\in \mathbb{T},s<t\big\}$ for all $t\in\mathbb{T}$. Finally, the graininess function
$\mu: \mathbb{T}\rightarrow [0,\infty)$ is defined by $\mu(t)=\sigma(t)-t$.
A point $t\in\mathbb{T}$ is called left-dense if $t>\inf\mathbb{T}$
and $\rho(t)=t$, left-scattered if $\rho(t)<t$, right-dense if
$t<\sup\mathbb{T}$ and $\sigma(t)=t$, and right-scattered if
$\sigma(t)>t$. If $\mathbb{T}$ has a left-scattered maximum $m$,
then $\mathbb{T}^k=\mathbb{T}\setminus\{m\}$; otherwise
$\mathbb{T}^k=\mathbb{T}$. If $\mathbb{T}$ has a right-scattered
minimum $m$, then $\mathbb{T}_k=\mathbb{T}\setminus\{m\}$; otherwise
$\mathbb{T}_k=\mathbb{T}$.
A function $f:\mathbb{T}\rightarrow\mathbb{R}$ is rd-continuous provided
it is continuous at right-dense points in $\mathbb{T}$ and its left-side
limits exist at left-dense points in $\mathbb{T}$. The set of all rd-continuous
functions $f:\mathbb{T}\rightarrow\mathbb{R}$ will be denoted by $C_{rd}=C_{rd}(\mathbb{T})=C_{rd}(\mathbb{T},\mathbb{R})$.
A function $r:\mathbb{T}\rightarrow\mathbb{R}$ is called regressive
if
$
1+\mu(t)r(t)\neq 0
$
for all $t\in \mathbb{T}^k$. The set of all regressive and
right-dense continuous functions $r:\mathbb{T}\rightarrow\mathbb{R}$ will
be denoted by $\mathcal{R}=\mathcal{R}(\mathbb{T})=\mathcal{R}(\mathbb{T},\mathbb{R})$. We
define the set
$\mathcal{R}^+=\mathcal{R}^+(\mathbb{T},\mathbb{R})=\{r\in \mathcal{R}:1+\mu(t)r(t)>0,\,\,\forall
t\in\mathbb{T}\}$.
Let $A$ be an $m\times n$-matrix-valued function on $\mathbb{T}$. We say that $A$ is rd-continuous on $\mathbb{T}$ if each entry of $A$ is rd-continuous on $\mathbb{T}$. We denote the class of all rd-continuous $m\times n$ matrix-valued functions on $\mathbb{T}$ by  $C_{rd}=C_{rd}(\mathbb{T})=C_{rd}(\mathbb{T},\mathbb{R}^{m\times n})$.
An $n\times n$-matrix-valued function $A$ on a time scale $\mathbb{T}$ is called regressive (with respect to $\mathbb{T}$) provided
$(1+\mu(t)A(t))$ is invertible for all $t\in \mathbb{T}^k$. The set of all regressive and rd-continuous functions is denoted by $\mathcal{R}=\mathcal{R}(\mathbb{T})=\mathcal{R}(\mathbb{T},\mathbb{R}^{n\times n})$.
For more knowledge of time scales, one can refer to  \cite{6,7}.

\begin{definition}$(\cite{s20})$
Let $x\in\mathbb{R}^{n}$ and $A(t)$ be an $n\times n$
rd-continuous matrix on $\mathbb{T}$, the linear system
\begin{eqnarray}\label{e21}
x^{\Delta}(t)=A(t)x(t),\,\, t\in\mathbb{T}
\end{eqnarray}
is said to admit an exponential dichotomy on $\mathbb{T}$ if there
exist positive constants $k, \alpha$, a projection $P$ and a
fundamental solution matrix $X(t)$ of \eqref{e21}, satisfying
\begin{eqnarray*}
|X(t)PX^{-1}(\sigma(s))|\leq ke_{\ominus
\alpha}(t,\sigma(s)),\,\,
s, t \in\mathbb{T}, t \geq \sigma(s),\\
|X(t)(I-P)X^{-1}(\sigma(s))|\leq ke_{\ominus
\alpha}(\sigma(s),t),\,\, s, t \in\mathbb{T}, t \leq \sigma(s),
\end{eqnarray*}
where $|\cdot|$ is a matrix norm on $\mathbb{T}$, that is $A=(a_{ij})_{n\times n}$, then we can take
$|A|=(\sum\limits_{i=1}^{n}\sum\limits_{j=1}^{n}|a_{ij}|^{2})^{\frac{1}{2}})$.
\end{definition}

\begin{definition}\cite{li}\label{d}
Let $\mathbb{T}_1$ and $\mathbb{T}_2$ be two time scales, we define
\[
\mathrm{dist}(\mathbb{T}_1,\mathbb{T}_2)=\max\{\sup\limits_{t\in \mathbb{T}_1}\big\{\mathrm{dist}(t,\mathbb{T}_2)\big\},\sup\limits_{t\in \mathbb{T}_2}\big\{\mathrm{dist}(t,\mathbb{T}_1)\big\}\}
\]
where, $\mathrm{dist}(t,\mathbb{T}_2)=\inf\limits_{s\in \mathbb{T}_2}\{|t-s|\},\mathrm{dist}(t,\mathbb{T}_1)=\inf\limits_{s\in \mathbb{T}_1}\{|t-s|\}$. Let $\tau\in \mathbb{R}$ and $\mathbb{T}$ be a time scale, we define
\[
\mathrm{dist}(\mathbb{T},\mathbb{T}_\tau)=\max\{\sup\limits_{t\in \mathbb{T}}\big\{\mathrm{dist}(t,\mathbb{T}_\tau)\big\},\sup\limits_{t\in \mathbb{T}_\tau}\big\{\mathrm{dist}(t,\mathbb{T})\big\}\},
\]
where $\mathbb{T}_\tau:=\mathbb{T}\cap\{\mathbb{T}-\tau \}=\mathbb{T}\cap\{t-\tau: \forall t\in \mathbb{T}\}$, $\mathrm{dist}(t,\mathbb{T}_\tau)=\inf\limits_{s\in \mathbb{T}_\tau}\{|t-s|\},\mathrm{dist}(t,\mathbb{T})=\inf\limits_{s\in \mathbb{T}}\{|t-s|\}$.
\end{definition}
\begin{definition}\cite{li}\label{def33}
A time scale $\mathbb{T}$ is called an almost periodic time scale if for every $\varepsilon>0$, there exists a constant $l(\varepsilon)>0$ such that each interval of length $l(\varepsilon)$ contains a $\tau(\varepsilon)$ such that $\mathbb{T}_\tau\neq \emptyset$ and
$
\mathrm{dist}(\mathbb{T},\mathbb{T}_\tau)<\varepsilon,
$
that is, for any $\varepsilon > 0$, the   set
$
\Pi(\mathbb{T},\varepsilon)=\{\tau\in \mathbb{R}, \mathrm{dist}(\mathbb{T},\mathbb{T}_\tau)<\varepsilon\}
$
is relatively dense.  $\tau$ is called the $\varepsilon$-translation number of $\mathbb{T}$.
\end{definition}
Obviously, if $\mathbb{T}$ is an almost periodic time scale, then
$\inf\mathbb{T}=-\infty$ and $\sup\mathbb{T}=+\infty,$ if $\mathbb{T}$ is a periodic time scale (see \cite{li}), then $\mathrm{dist}(\mathbb{T},\mathbb{T}_\tau)=0$, that is $\mathbb{T}=\mathbb{T}_\tau$.
\begin{remark}
One can easily see that if a time scale is an almost time scale under Definition \ref{da1}, then it is also an almost time scale under Definition \ref{def33}.
\end{remark}
\begin{lemma}\cite{li}\label{aa1} Let $\mathbb{T}$ be an almost periodic time scale under Definition \ref{def33}, then
\begin{itemize}
  \item [$(i)$]if $\tau\in \Pi(\mathbb{T},\varepsilon)$, then $t+\tau\in \mathbb{T}$ for all $t\in \mathbb{T}_\tau$;
  \item [$(ii)$] if $\varepsilon_1<\varepsilon_2$, then $\Pi(\mathbb{T},\varepsilon_1)\subset \Pi(\mathbb{T},\varepsilon_2)$;
   \item [$(iii)$]if $\tau\in \Pi(\mathbb{T},\varepsilon)$, then $-\tau\in\Pi(\mathbb{T},\varepsilon)$ and $\mathrm{dist}(\mathbb{T}_\tau,\mathbb{T})=\mathrm{dist}(\mathbb{T}_{-\tau},\mathbb{T})$;
    \item [$(iv)$]if  $\tau_1,\tau_2\in \Pi(\mathbb{T},\varepsilon)$, then $\tau_1+ \tau_2\in\Pi(\mathbb{T},2\varepsilon)$.
\end{itemize}
\end{lemma}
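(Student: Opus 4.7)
My plan is to verify (i)--(iv) by unpacking the definition $\mathbb{T}_\tau = \mathbb{T} \cap (\mathbb{T} - \tau)$ in concert with the isometry $t \mapsto t + \tau$ of $\mathbb{R}$, which plays a central role throughout.

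Parts (i) and (ii) should fall out by inspection. For (i), if $t \in \mathbb{T}_\tau$ then in particular $t \in \mathbb{T} - \tau$, so $t + \tau \in \mathbb{T}$; notice the hypothesis $\tau \in \Pi(\mathbb{T},\varepsilon)$ is not actually used, only the definition of $\mathbb{T}_\tau$. Part (ii) follows because the defining strict inequality $\mathrm{dist}(\mathbb{T}, \mathbb{T}_\tau) < \varepsilon$ is monotone in $\varepsilon$, so $\Pi(\mathbb{T},\varepsilon_1) \subset \Pi(\mathbb{T},\varepsilon_2)$ whenever $\varepsilon_1 < \varepsilon_2$.

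For (iii), the decisive fact I would use is that $t \mapsto t + \tau$ restricts to a bijective isometry $\mathbb{T}_\tau \to \mathbb{T}_{-\tau}$: for $t \in \mathbb{T}_\tau$ both $t$ and $t + \tau$ lie in $\mathbb{T}$, so $s := t + \tau$ satisfies $s \in \mathbb{T}$ and $s - \tau = t \in \mathbb{T}$, whence $s \in \mathbb{T}_{-\tau}$; the inverse is $s \mapsto s - \tau$. This yields the pointwise identity $\mathrm{dist}(x, \mathbb{T}_{-\tau}) = \mathrm{dist}(x - \tau, \mathbb{T}_\tau)$ for every $x \in \mathbb{R}$. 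Since $\mathbb{T}_{\pm \tau} \subset \mathbb{T}$, the Hausdorff distance in each direction collapses to a one-sided supremum over $\mathbb{T}$, and I close the argument by pairing near-extremizers via the isometry together with the $1$-Lipschitz property of $x \mapsto \mathrm{dist}(x, \mathbb{T}_\tau)$; this pairing matches the supremum over $\mathbb{T}$ with the supremum over the shift $\mathbb{T} - \tau$, yielding $\mathrm{dist}(\mathbb{T}, \mathbb{T}_\tau) = \mathrm{dist}(\mathbb{T}, \mathbb{T}_{-\tau})$ and in particular $-\tau \in \Pi(\mathbb{T}, \varepsilon)$.

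For (iv), given $\tau_1, \tau_2 \in \Pi(\mathbb{T}, \varepsilon)$ and $x \in \mathbb{T}$, I would pick $u \in \mathbb{T}_{\tau_1}$ with $|u - x| < \varepsilon$, so that $u, u + \tau_1 \in \mathbb{T}$. Applying the $\varepsilon$-density of $\mathbb{T}_{\tau_2}$ at $u + \tau_1$ yields $v \in \mathbb{T}_{\tau_2}$ with $|v - (u + \tau_1)| < \varepsilon$, whence $v + \tau_2 \in \mathbb{T}$. The candidate $y := v - \tau_1$ then satisfies $y + (\tau_1 + \tau_2) = v + \tau_2 \in \mathbb{T}$ with $|y - x| \leq |v - (u + \tau_1)| + |u - x| < 2\varepsilon$; the remaining verification that $y \in \mathbb{T}$ invokes (iii), which makes $-\tau_1 \in \Pi(\mathbb{T},\varepsilon)$ and hence $\mathbb{T}_{-\tau_1}$ available, allowing a refined choice of $v$ in $\mathbb{T}_{\tau_2} \cap \mathbb{T}_{-\tau_1}$ via the same pairing technique, so that $v - \tau_1 \in \mathbb{T}_{\tau_1 + \tau_2}$ as required.

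The main obstacle is the equality of Hausdorff distances in (iii). For a generic closed subset of $\mathbb{R}$ the suprema $\sup_{x \in \mathbb{T}} \mathrm{dist}(x, \mathbb{T}_\tau)$ and $\sup_{x \in \mathbb{T}} \mathrm{dist}(x, \mathbb{T}_{-\tau})$ can genuinely differ (small asymmetric examples confirm this), so the almost-periodicity of $\mathbb{T}$ must enter in an essential way; the delicate step is to execute the isometry-based pairing so that extremizers of the two suprema are matched without losing tightness, since the naive shift argument only transfers the supremum over $\mathbb{T}$ to a supremum over $\mathbb{T} - \tau$, and bridging these two domains is exactly where the $\varepsilon$-density hypothesis has to be exploited.
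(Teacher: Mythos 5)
The paper does not prove this lemma at all: it is imported verbatim from reference \cite{li} and used as a black box, so there is no in-paper argument to compare yours against. Parts $(i)$ and $(ii)$ of your proposal are correct and complete. The problem is that the two substantive parts, $(iii)$ and $(iv)$, are left with genuine gaps that your own closing paragraph essentially concedes.

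For $(iii)$, you correctly reduce everything to the identity $\mathrm{dist}(\mathbb{T},\mathbb{T}_{-\tau})=\sup_{y\in\mathbb{T}-\tau}\mathrm{dist}(y,\mathbb{T}_{\tau})$ via the isometry $t\mapsto t+\tau$ of $\mathbb{T}_\tau$ onto $\mathbb{T}_{-\tau}$, and you correctly observe that equating this with $\sup_{y\in\mathbb{T}}\mathrm{dist}(y,\mathbb{T}_{\tau})$ is the whole content of the claim. But the ``pairing of near-extremizers'' that is supposed to bridge the supremum over $\mathbb{T}$ and the supremum over $\mathbb{T}-\tau$ is never actually constructed; the $1$-Lipschitz property of $x\mapsto\mathrm{dist}(x,\mathbb{T}_\tau)$ only yields a bound off by $|\tau|$, which is useless here. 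Your instinct that almost periodicity must enter is right --- for $\mathbb{T}=\mathbb{Z}\cup\{n+\tfrac12:n\geq 0\}$ and $\tau=1$ one gets $\mathbb{T}_{1}=\mathbb{T}$ but $\mathrm{dist}(\mathbb{T},\mathbb{T}_{-1})=\tfrac12$, so the exact equality is false for a general closed set --- but identifying where a hypothesis must be used is not the same as using it, and as written the proof of $(iii)$ does not exist. For $(iv)$, your candidate $y=v-\tau_1$ indeed satisfies $y+\tau_1+\tau_2=v+\tau_2\in\mathbb{T}$ and $|y-x|<2\varepsilon$, but $y\in\mathbb{T}$ is exactly what is missing, and the proposed repair --- choosing $v$ in $\mathbb{T}_{\tau_2}\cap\mathbb{T}_{-\tau_1}$ --- requires that this \emph{intersection} be $\varepsilon$-dense near $u+\tau_1$, which does not follow from each factor separately having that property (two relatively dense, even $\varepsilon$-dense, subsets of $\mathbb{R}$ can have empty intersection). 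Both gaps are at the same pressure point: the definition $\mathbb{T}_\tau=\mathbb{T}\cap(\mathbb{T}-\tau)$ is an intersection, so translation arguments keep producing points that satisfy one membership condition but not the other, and no amount of purely local shifting resolves this. A complete proof would have to supply the missing global mechanism (or retreat to the inequality $\mathrm{dist}(\mathbb{T},\mathbb{T}_{-\tau})\leq\mathrm{dist}(\mathbb{T},\mathbb{T}_{\tau})+\delta$-type statements), and since the present paper simply cites \cite{li}, you would need to consult that source to see whether such a mechanism is actually provided there.
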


\section{Almost time scales and almost automorphic functions}
\setcounter{equation}{0}
\indent

In this section, we first introduce a new concept of almost time scales and a new definition of almost automorphic functions on time scales, then we discuss some of their properties.

\begin{definition}\label{n1}
A time scale $\mathbb{T}$ is called an almost periodic time scale if
\begin{itemize}
\item[$(i)$] $\Pi:=\{\tau\in \mathbb{R}:\mathbb{T}_{\tau}\neq\emptyset\}\neq \{0\}$ and $\widetilde{\mathbb{T}}\neq\emptyset$,
\item[$(ii)$] if $\tau_{1}, \tau_{2}\in \Pi$, then $\tau_{1}\pm\tau_{2}\in\Pi$,
\end{itemize}
where $\mathbb{T}_{\tau}=\mathbb{T}\cap\{\mathbb{T}-\tau\}=\mathbb{T}\cap\{t-\tau: \forall t\in \mathbb{T}\}$ and $\widetilde{\mathbb{T}}=\bigcap\limits_{\tau\in\Pi}\mathbb{T}_{\tau}$.
\end{definition}
It is obvious that if $\tau\in \Pi$, then $\pm\tau\in \Pi$ and $t\pm\tau\in \mathbb{T}$ for all $t\in \widetilde{\mathbb{T}}$.
\begin{remark}Noticing the fact that $\Pi=\{\tau\in \mathbb{R}: \mathbb{T}_\tau\neq \emptyset\}\supset \Pi(\mathbb{T},\varepsilon)$,
from Lemma \ref{aa1}, one can easily see that if a time scale is an almost time scale under Definition \ref{def33}, then it is also an almost time scale under Definition \ref{n1}.
\end{remark}

\begin{definition}\label{n21}
Let $\mathbb{T}$ be an almost periodic time scale and $\mathbb{X}$ be a Banach space. A bounded rd-continuous function $f:\mathbb{T}\rightarrow\mathbb{X}$ is said to be almost automorphic, if for every sequence $(s_{n}^{'})\subset \Pi$, there is a subsequence $(s_{n})\subset(s_{n}^{'})$ such that
    \[\lim\limits_{n\rightarrow\infty}f(t+s_{n})=\bar{f}(t)\]
    is well defined for each $t\in\widetilde{\mathbb{T}}$,  and
    \[\lim\limits_{n\rightarrow\infty}\bar{f}(t-s_{n})=f(t)\]
    for each $t\in\widetilde{\mathbb{T}}$.
\end{definition}
Denote by $AA(\mathbb{T},\mathbb{X})$ the set of all almost automorphic functions on time scale $\mathbb{T}$.
\begin{remark}
From Definition \ref{n21}, one can easily see that if a function $f:\mathbb{T} \rightarrow\mathbb{X}$ is an  almost automorphic function under Definition \ref{da2}, then it is also an almost automorphic function under Definition \ref{n21}.
\end{remark}

\begin{lemma} Let $\mathbb{T}$ be an almost periodic time scale and suppose $f,f_{1},f_{2}\in AA(\mathbb{T},\mathbb{X})$. Then we have the following
\begin{itemize}
\item[$(i)$] $f_{1}+f_{2}\in AA(\mathbb{T},\mathbb{X})$;
\item[$(ii)$] $\alpha f\in AA(\mathbb{T},\mathbb{X})$ for any constant $\alpha\in\mathbb{R}$;
\item[$(iii)$] $f_{c}(t)\equiv f(c+t)\in AA(\mathbb{T},\mathbb{X})$  for each fixed $c\in\widetilde{\mathbb{T}}$.
\end{itemize}
\end{lemma}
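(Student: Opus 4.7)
All three statements share a common structure: the boundedness and rd-continuity of the output function are inherited from elementary operations, so the content lies in producing a single common subsequence along which the defining convergences of Definition~\ref{n21} pass to the new function.

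For (i) the plan is the standard successive-extraction trick. Given $(s_n')\subset\Pi$, first invoke the almost automorphy of $f_1$ to extract a subsequence $(s_n^{(1)})\subset(s_n')$ such that
\[
\lim_{n\to\infty} f_1(t+s_n^{(1)})=\bar f_1(t),\qquad \lim_{n\to\infty} \bar f_1(t-s_n^{(1)})=f_1(t)
\]
for every $t\in\widetilde{\mathbb{T}}$. Then apply the almost automorphy of $f_2$ to the sequence $(s_n^{(1)})$ to extract $(s_n)\subset(s_n^{(1)})$ along which the analogous convergences hold for $f_2$. Because $(s_n)$ is a subsequence of $(s_n^{(1)})$, both limits for $f_1$ persist. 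Setting $\bar f:=\bar f_1+\bar f_2$ and using linearity of convergence in the Banach space $\mathbb{X}$ yields the required limits for $f_1+f_2$, establishing (i). Part (ii) is essentially trivial: the subsequence that witnesses the almost automorphy of $f$ also works for $\alpha f$, with limit $\alpha\bar f$.

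For (iii) I would fix $c\in\widetilde{\mathbb{T}}$ and, for any $(s_n')\subset\Pi$, use the almost automorphy of $f$ to extract $(s_n)$ with limit $\bar f$. The natural candidate for the new limit is $\bar f_c(t):=\bar f(c+t)$, since
\[
f_c(t+s_n)=f\bigl(c+t+s_n\bigr)\longrightarrow\bar f(c+t)=\bar f_c(t),
\]
with the reverse identity handled in the same manner. This rewriting relies on invoking the almost automorphy of $f$ at the point $c+t$, which in turn demands that $c+t\in\widetilde{\mathbb{T}}$.

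The main obstacle I foresee is precisely this translation step: one must establish that $\widetilde{\mathbb{T}}$ is stable under addition of its own elements, i.e.\ $c,t\in\widetilde{\mathbb{T}}\Rightarrow c+t\in\widetilde{\mathbb{T}}$. This is not transparent from the raw definition and should follow by writing $\widetilde{\mathbb{T}}=\bigcap_{\tau\in\Pi}\mathbb{T}_\tau$ and exploiting the closure of $\Pi$ under $\pm$ afforded by Definition~\ref{n1}(ii); without this shift invariance the value $\bar f(c+t)$ is not even furnished by the hypothesis on $f$, and (iii) collapses. Once the membership $c+t\in\widetilde{\mathbb{T}}$ is secured, the three convergence arguments proceed without further difficulty.
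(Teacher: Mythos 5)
Your proposal follows essentially the same route as the paper: successive extraction of subsequences for $(i)$ (which the paper actually compresses into a single unexplained extraction, so your version is the more careful one), the trivial observation for $(ii)$, and the shifted limit function $\bar f_c(t)=\bar f(c+t)$ for $(iii)$. The one place you go beyond the paper is in flagging that $(iii)$ needs $c+t\in\widetilde{\mathbb{T}}$; the paper invokes $\bar f(c+t)$ without comment. Be aware, though, that your proposed repair does not actually close this gap: Definition~\ref{n1}$(ii)$ gives closure of $\Pi$ under $\pm$, and the paper's remark after that definition gives $t\pm\tau\in\mathbb{T}$ for $t\in\widetilde{\mathbb{T}}$ and $\tau\in\Pi$ --- but here $c$ is an element of $\widetilde{\mathbb{T}}$, not of $\Pi$, so neither fact yields $c+t\in\widetilde{\mathbb{T}}$ (indeed a time scale such as the odd integers is not closed under addition of its own elements). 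So the obstacle you identified is genuine and is present, unaddressed, in the paper's own proof as well; your argument is otherwise complete.
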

\begin{proof} $(i)$ Let $f_{1},f_{2}\in AA(\mathbb{T},\mathbb{X})$. Then, for every sequence $(s_{n}^{'})\subset \Pi$, there is a subsequence $(s_{n})\subset(s_{n}^{'})$ such that
\begin{eqnarray*}
\lim\limits_{n\rightarrow\infty}f_{1}(t+s_{n})=\bar{f}_{1}(t)\,\,\,\,\mathrm{and}\,\,\,\,\lim\limits_{n\rightarrow\infty}f_{2}(t+s_{n})
=\bar{f}_{2}(t)
\end{eqnarray*}
are well defined for each $t\in\widetilde{\mathbb{T}}$, and
\begin{eqnarray*}
\lim\limits_{n\rightarrow\infty}\bar{f}_{1}(t-s_{n})=f_{1}(t)\,\,\,\,\mathrm{and}\,\,\,\,\lim\limits_{n\rightarrow\infty}\bar{f}_{2}(t-s_{n})
=f_{2}(t)
\end{eqnarray*}
for each $t\in\widetilde{\mathbb{T}}$. Therefore, we obtain
\begin{eqnarray*}
\lim\limits_{n\rightarrow\infty}(f_{1}+f_{2})(t+s_{n})=\lim\limits_{n\rightarrow\infty}\big(f_{1}(t+s_{n})+f_{2}(t+s_{n})\big)
=\bar{f}_{1}(t)+\bar{f}_{2}(t)
\end{eqnarray*}
is well defined for each $t\in\widetilde{\mathbb{T}}$, and
\begin{eqnarray*}
\lim\limits_{n\rightarrow\infty}(\bar{f}_{1}+\bar{f}_{2})(t-s_{n})
=\lim\limits_{n\rightarrow\infty}\big(\bar{f}_{1}(t-s_{n})+\bar{f}_{2}(t-s_{n})\big)
=f_{1}(t)+f_{2}(t)
\end{eqnarray*}
for each $t\in\widetilde{\mathbb{T}}$.

$(ii)$ Since $f\in AA(\mathbb{T},\mathbb{X})$, then for every sequence $(s_{n}^{'})\subset \Pi$, there is a subsequence $(s_{n})\subset(s_{n}^{'})$ such that
\begin{eqnarray*}
\lim\limits_{n\rightarrow\infty}(\alpha f)(t+s_{n})=\lim\limits_{n\rightarrow\infty}\alpha f(t+s_{n})=\alpha\bar{f}(t)=(\alpha\bar{f})(t)
\end{eqnarray*}
is well defined for each $t\in\widetilde{\mathbb{T}}$, and
\begin{eqnarray*}
\lim\limits_{n\rightarrow\infty}(\alpha \bar{f})(t-s_{n})=\lim\limits_{n\rightarrow\infty}\alpha\bar{f}(t-s_{n})=\alpha f(t)=(\alpha f)(t)
\end{eqnarray*}
for each $t\in\widetilde{\mathbb{T}}$.

$(iii)$ It follows from $f\in AA(\mathbb{T},\mathbb{X})$, $c\in\widetilde{\mathbb{T}}$ that for every sequence $(s_{n}^{'})\subset \Pi$, there is a subsequence $(s_{n})\subset(s_{n}^{'})$ such that
\begin{eqnarray*}
\lim\limits_{n\rightarrow\infty}f_{c}(t+s_{n})=\lim\limits_{n\rightarrow\infty}f((c+t)+s_{n})=\bar{f}(c+t)=\bar{f}_{c}(t)
\end{eqnarray*}
is well defined for each $t\in\widetilde{\mathbb{T}}$, and
\begin{eqnarray*}
\lim\limits_{n\rightarrow\infty}\bar{f}_{c}(t-s_{n})=\lim\limits_{n\rightarrow\infty}\bar{f}((c+t)-s_{n})=f(c+t)=f_{c}(t)
\end{eqnarray*}
for each $t\in\widetilde{\mathbb{T}}$.
The proof  is completed.
\end{proof}

\begin{lemma} Let $\mathbb{T}$ be an almost periodic time scale and  functions $f,\phi:\mathbb{T}\rightarrow\mathbb{X}$ be almost automorphic, then the function $\phi f:\mathbb{T}\rightarrow\mathbb{X}$ defined by $(\phi f)(t)=\phi(t)f(t)$ is also almost automorphic.
\end{lemma}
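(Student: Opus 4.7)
The plan is to use a standard two-step subsequence extraction argument (the Bochner-type diagonal trick) to obtain a single subsequence $(s_n)$ that works simultaneously for $f$ and $\phi$, and then pass to the limit in the product using boundedness.

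First, fix an arbitrary sequence $(s_n') \subset \Pi$. Since $f \in AA(\mathbb{T},\mathbb{X})$, I can extract a subsequence $(s_n'') \subset (s_n')$ and a function $\bar{f}$ such that $\lim_{n\to\infty} f(t+s_n'') = \bar{f}(t)$ and $\lim_{n\to\infty} \bar{f}(t-s_n'') = f(t)$ for every $t \in \widetilde{\mathbb{T}}$. Applying the almost automorphy of $\phi$ to the sequence $(s_n'')$, I then extract a further subsequence $(s_n) \subset (s_n'')$ and a function $\bar{\phi}$ satisfying the analogous limit conditions. Because subsequences inherit the pointwise limits, $f(t+s_n) \to \bar{f}(t)$ and $\bar{f}(t-s_n) \to f(t)$ still hold along $(s_n)$.

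Next, I would pass to the limit in the product using the identity
\[
\phi(t+s_n) f(t+s_n) - \bar{\phi}(t)\bar{f}(t) = \phi(t+s_n)\bigl[f(t+s_n) - \bar{f}(t)\bigr] + \bigl[\phi(t+s_n) - \bar{\phi}(t)\bigr]\bar{f}(t).
\]
Since $\phi$ is bounded (say by $M_\phi$) as an almost automorphic function, the first summand is bounded by $M_\phi \, |f(t+s_n) - \bar{f}(t)| \to 0$, and the second summand tends to zero because $\bar{f}(t)$ is a fixed element and $\phi(t+s_n) \to \bar{\phi}(t)$. Hence $(\phi f)(t+s_n) \to \bar{\phi}(t)\bar{f}(t) =: \overline{\phi f}(t)$ for each $t \in \widetilde{\mathbb{T}}$. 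A symmetric decomposition, using the boundedness of $\bar{\phi}$ (inherited pointwise from the uniform bound on $\phi$, since $|\bar{\phi}(t)| \le \sup_n |\phi(t+s_n)| \le M_\phi$), yields $\overline{\phi f}(t-s_n) \to \phi(t)f(t)$ on $\widetilde{\mathbb{T}}$.

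Finally, I would check the standing assumptions of Definition \ref{n21}: $\phi f$ is bounded as the product of two bounded functions, and it is rd-continuous as the product of two rd-continuous functions. Combined with the two limit conditions just established, this gives $\phi f \in AA(\mathbb{T},\mathbb{X})$. The argument is essentially routine; the only step that requires a bit of care is the successive subsequence extraction to obtain common translation numbers for both $f$ and $\phi$, and the implicit assumption that $\mathbb{X}$ carries a multiplication (or that one factor is scalar-valued) so that the pointwise product is meaningful.
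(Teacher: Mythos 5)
Your proof is correct and takes essentially the same route as the paper's: the identical two-step subsequence extraction (first for one factor, then a further subsequence for the other) followed by the same product decomposition, bounding one term by $\sup_t\|\phi(t)\|$ and the other by $\sup_t\|\bar f(t)\|$. Your closing observations on rd-continuity of the product and on the need for $\mathbb{X}$ to carry a multiplication are reasonable extra care that the paper's proof leaves implicit.
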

\begin{proof} Both functions $\phi$ and $f$ are bounded since they are almost automorphic. So we let $K_{1}=\sup\limits_{t\in\mathbb{T}}\|\phi(t)\|$.

Let the sequence $(s_{n}^{'})\subset \Pi$, then there exists a subsequence $(s_{n}^{''})\subset(s_{n}^{'})$ such that $\lim\limits_{n\rightarrow\infty}\phi(t+s_{n}^{''})=\bar{\phi}(t)$ is well defined for each $t\in\widetilde{\mathbb{T}}$ and $\lim\limits_{n\rightarrow\infty}\bar{\phi}(t-s_{n}^{''})=\phi(t)$ for each $t\in\widetilde{\mathbb{T}}$. Since $f$ is almost automorphic, there exiats a subsequence $(s_{n})\subset(s_{n}^{''})$ such that $\lim\limits_{n\rightarrow\infty}f(t+s_{n})=\bar{f}(t)$ is well defined for each $t\in\widetilde{\mathbb{T}}$ and $\lim\limits_{n\rightarrow\infty}\bar{f}(t-s_{n})=f(t)$ for each $t\in\widetilde{\mathbb{T}}$.

Now, we have
\begin{eqnarray*}
&&\|\phi(t+s_{n})f(t+s_{n})-\bar{\phi}(t)\bar{f}(t)\|\\
&\leq&\|\phi(t+s_{n})f(t+s_{n})-\phi(t+s_{n})\bar{f}(t)\|+\|\phi(t+s_{n})\bar{f}(t)-\bar{\phi}(t)\bar{f}(t)\|\\
&\leq&K_{1}\|f(t+s_{n})-\bar{f}(t)\|+K_{2}\|\phi(t+s_{n})-\bar{\phi}(t)\|\\
&\leq&(K_{1}+K_{2})\varepsilon
\end{eqnarray*}
for $n$ sufficiently large, where $K_{2}=\sup\limits_{t\in\mathbb{T}}\|\bar{f}(t)\|<\infty$.
Thus, we obtain
\[\lim\limits_{n\rightarrow\infty}\phi(t+s_{n})f(t+s_{n})=\bar{\phi}(t)\bar{f}(t)\]
for each $t\in\widetilde{\mathbb{T}}$.

It is also easy to check that
\[\lim\limits_{n\rightarrow\infty}\bar{\phi}(t-s_{n})\bar{f}(t-s_{n})=\phi(t)f(t)\]
for each $t\in\widetilde{\mathbb{T}}$. The proof is now complete.
\end{proof}

\begin{lemma} Let $\mathbb{T}$ be an almost periodic time scale and $(f_{n})$ be a sequence of almost automorphic functions such that $\lim\limits_{n\rightarrow\infty}f_{n}(t)=f(t)$ converges uniformly for each $t\in\widetilde{\mathbb{T}}$. Then, $f$ is an almost automorphic function.
\end{lemma}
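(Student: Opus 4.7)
The plan is to combine a Cantor diagonal extraction over the index $n$ of the sequence $(f_n)$ with a standard $3\varepsilon$-argument that exploits the uniform convergence $f_n\to f$. First I would note that $f$ inherits boundedness and rd-continuity as the uniform limit of bounded rd-continuous functions, so the regularity required by Definition \ref{n21} is automatic for $f$; only the selection property must be verified.

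Given an arbitrary sequence $(s_k')\subset \Pi$, I would inductively extract nested subsequences. At stage $n$, since $f_n$ is almost automorphic, from the previously selected subsequence I extract $(s_k^{(n)})$ along which
\[
\bar f_n(t):=\lim_{k\to\infty} f_n(t+s_k^{(n)}),\qquad \lim_{k\to\infty}\bar f_n(t-s_k^{(n)})=f_n(t)
\]
hold for every $t\in\widetilde{\mathbb{T}}$. Taking the Cantor diagonal $s_k:=s_k^{(k)}$, for each fixed $n$ the tail of $(s_k)$ is a subsequence of $(s_k^{(n)})$, so
\[
\lim_{k\to\infty}f_n(t+s_k)=\bar f_n(t),\qquad \lim_{k\to\infty}\bar f_n(t-s_k)=f_n(t),\qquad t\in\widetilde{\mathbb{T}}.
\]

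Next I would show that $(\bar f_n)$ is uniformly Cauchy on $\widetilde{\mathbb{T}}$: fix $\varepsilon>0$ and choose $N$ with $\sup_{t\in\mathbb{T}}\|f_n(t)-f_m(t)\|<\varepsilon$ for $n,m\ge N$; passing $k\to\infty$ in the inequality $\|f_n(t+s_k)-f_m(t+s_k)\|\le\varepsilon$ yields $\|\bar f_n(t)-\bar f_m(t)\|\le\varepsilon$ for every $t\in\widetilde{\mathbb{T}}$. Hence $\bar f_n$ converges uniformly on $\widetilde{\mathbb{T}}$ to some limit function $\bar f$. The $3\varepsilon$-argument then delivers the pointwise limits required by Definition \ref{n21}: from
\[
\|f(t+s_k)-\bar f(t)\|\le \|f(t+s_k)-f_n(t+s_k)\|+\|f_n(t+s_k)-\bar f_n(t)\|+\|\bar f_n(t)-\bar f(t)\|,
\]
I first choose $n$ large (uniformly in $k$) so that the outer two summands are each less than $\varepsilon$, then $k$ large so that the middle term falls below $\varepsilon$, obtaining $\lim_{k\to\infty}f(t+s_k)=\bar f(t)$; the symmetric estimate handles $\lim_{k\to\infty}\bar f(t-s_k)=f(t)$.

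No genuine obstacle arises; the only minor point of care is that in the first and third summands of the $3\varepsilon$ estimate the argument $t+s_k$ lies in $\mathbb{T}$ but need not lie in $\widetilde{\mathbb{T}}$, so the hypothesis must be read as uniform convergence of $f_n$ to $f$ on all of $\mathbb{T}$ (which is the natural reading). Once this is granted, the diagonal bookkeeping and the two $3\varepsilon$-estimates are entirely mechanical and complete the verification.
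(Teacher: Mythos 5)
Your proposal is correct and follows essentially the same route as the paper: a Cantor diagonal extraction over the index of $(f_n)$, a Cauchy estimate for $(\bar f_n)$ obtained by passing to the limit in the uniform-convergence bound, and the standard $3\varepsilon$ triangle inequality to obtain both limits required by Definition \ref{n21}. Your explicit remark that the uniform convergence must be read on all of $\mathbb{T}$ (since $t+s_k$ need not lie in $\widetilde{\mathbb{T}}$) is a small point of care that the paper's version passes over silently.
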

\begin{proof} Let the sequence $(s_{n}^{'})\subset \Pi$. As in the standard case of the almost automorphic functions the approach follows across the diagonal procedure. Since $f_{1}\in AA(\mathbb{T},\mathbb{X})$, then there exists a subsequence $(s_n^{(1)})\subset(s_{n}^{'})$ such that
\begin{eqnarray*}
\lim\limits_{n\rightarrow\infty}f_{1}(t+s_n^{(1)})=\bar{f_{1}}(t)
\end{eqnarray*}
is well defined for each $t\in\widetilde{\mathbb{T}}$, and
\begin{eqnarray*}
\lim\limits_{n\rightarrow\infty}\bar{f_{1}}(t-s_n^{(1)})=f_{1}(t)
\end{eqnarray*}
for each $t\in\widetilde{\mathbb{T}}$. Since $f_{2}\in AA(\mathbb{T},\mathbb{X})$, then there exists a subsequence $(s_n^{(2)})\subset(s_n^{(1)})$ such that
\begin{eqnarray*}
\lim\limits_{n\rightarrow\infty}f_{2}(t+s_n^{(2)})=\bar{f_{2}}(t)
\end{eqnarray*}
is well defined for each $t\in\widetilde{\mathbb{T}}$, and
\begin{eqnarray*}
\lim\limits_{n\rightarrow\infty}\bar{f_{2}}(t-s_n^{(2)})=f_{2}(t)
\end{eqnarray*}
for each $t\in\widetilde{\mathbb{T}}$. Following, by this procedure, we can construct a subsequence $(s_n^{(n)})\subset(s_{n}^{'})$ such that
\begin{eqnarray}\label{ff1}
\lim\limits_{n\rightarrow\infty}f_{i}(t+s_n^{(n)})=\bar{f_{i}}(t)
\end{eqnarray}
for each $t\in\widetilde{\mathbb{T}}$ and for all $i=1,2,\ldots$. Let us consider
\begin{eqnarray}\label{ff2}
\|\bar{f_{i}}(t)-\bar{f_{j}}(t)\|&\leq&\|\bar{f_{i}}(t)-f_{i}(t+s_n^{(n)})\|+\|f_{i}(t+s_n^{(n)})-f_{j}(t+s_n^{(n)})\|\nonumber\\
&&+\|f_{j}(t+s_n^{(n)})-\bar{f_{j}}(t)\|.
\end{eqnarray}
By the uniformly convergence of $(f_{n})$, for any $\varepsilon>0$, we can find $N=N(\varepsilon)\in\mathbb{N}$ sufficiently large such that for all $i,j>N$, we have
\begin{eqnarray}\label{ff3}
\|f_{i}(t+s_n^{(n)})-f_{j}(t+s_n^{(n)})\|<\varepsilon
\end{eqnarray}
for all $t\in\widetilde{\mathbb{T}}$ and all $n=1,2,\ldots$.

Hence, taking $i,j$ sufficiently large in \eqref{ff2} and using \eqref{ff3} and the limit \eqref{ff1}, we can conclude that $(\bar{f_{i}}(t))$ is a Cauchy sequence. Since $\mathbb{X}$ is a Banach space, then $(\bar{f_{i}}(t))$ is a sequence which converges pointwisely on $\mathbb{X}$. Let $\bar{f}(t)$ be the limit of $(\bar{f_{i}}(t))$, then for each $i=1,2,\ldots$, we have
\begin{eqnarray}\label{ff4}
\|f(t+s_n^{(n)})-\bar{f}(t)\|&\leq&\|f(t+s_n^{(n)})-f_{i}(t+s_n^{(n)})\|+\|f_{i}(t+s_n^{(n)})-\bar{f_{i}}(t)\|\nonumber\\
&&+\|\bar{f_{i}}(t)-\bar{f}(t)\|.
\end{eqnarray}
Then, for $i$ sufficiently large, by \eqref{ff4} and using the almost automorphicity of $f_{i}$ and the convergence of the functions $f_{i}$ and $\bar{f_{i}}$, we obtain
\[\lim\limits_{n\rightarrow\infty}f(t+s_n^{(n)})=\bar{f}(t)\]
for each $t\in\widetilde{\mathbb{T}}$. Similarly, we can get
\[\lim\limits_{n\rightarrow\infty}\bar{f}(t-s_n^{(n)})=f(t)\]
for each $t\in\widetilde{\mathbb{T}}$. This completes the proof.
\end{proof}

\begin{lemma} Let $\mathbb{T}$ be an almost periodic time scale and $\mathbb{X},\mathbb{Y}$ be Banach spaces. If $f:\mathbb{T}\rightarrow\mathbb{X}$ is an almost automorphic function  and $\varphi:\mathbb{X}\rightarrow\mathbb{Y}$ is a continuous function, then the composite function $\varphi\circ f:\mathbb{T}\rightarrow\mathbb{Y}$ is an almost automorphic function.
\end{lemma}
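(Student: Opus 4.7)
The plan is to exploit the almost automorphy of $f$ to extract a convergent subsequence first, and then transport the pointwise limits through $\varphi$ by invoking its continuity at the relevant points; the candidate limit function in $\mathbb{Y}$ will simply be $\varphi$ composed with the limit function already produced for $f$.

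First, given an arbitrary sequence $(s_n')\subset \Pi$, I would apply Definition \ref{n21} to $f\in AA(\mathbb{T},\mathbb{X})$ to obtain a subsequence $(s_n)\subset(s_n')$ and a function $\bar f:\widetilde{\mathbb{T}}\to \mathbb{X}$ such that $f(t+s_n)\to \bar f(t)$ and $\bar f(t-s_n)\to f(t)$ in $\mathbb{X}$ for every $t\in\widetilde{\mathbb{T}}$. Next, I would define the candidate limit $\overline{\varphi\circ f}:\widetilde{\mathbb{T}}\to \mathbb{Y}$ by $\overline{\varphi\circ f}(t):=\varphi(\bar f(t))$. Fixing any $t\in\widetilde{\mathbb{T}}$, continuity of $\varphi$ at the point $\bar f(t)\in\mathbb{X}$ applied to the convergent sequence $f(t+s_n)\to\bar f(t)$ gives
\[
(\varphi\circ f)(t+s_n)=\varphi(f(t+s_n))\longrightarrow\varphi(\bar f(t))=\overline{\varphi\circ f}(t).
\]
The reverse limit is obtained identically: continuity of $\varphi$ at $f(t)$ combined with $\bar f(t-s_n)\to f(t)$ yields $\overline{\varphi\circ f}(t-s_n)\to\varphi(f(t))=(\varphi\circ f)(t)$.

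The remaining item to verify is that $\varphi\circ f$ itself satisfies the standing hypothesis of Definition \ref{n21}, namely being bounded and rd-continuous on $\mathbb{T}$. The rd-continuity is routine: at any right-dense $t_0\in\mathbb{T}$, the right-continuity of $f$ together with the continuity of $\varphi$ gives right-continuity of $\varphi\circ f$, and at left-dense points the left-sided limit of $f$ together with continuity of $\varphi$ delivers the left-sided limit of $\varphi\circ f$.

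I expect the genuine obstacle to be boundedness of $\varphi\circ f$. In a general Banach space $\mathbb{X}$, boundedness of $f$ does not imply that $f(\mathbb{T})$ is relatively compact, and a merely continuous map $\varphi$ need not send bounded sets to bounded sets. Thus, to close the proof one either strengthens the hypothesis on $\varphi$ (uniform continuity on bounded subsets of $\mathbb{X}$, or mapping bounded sets to bounded sets) or invokes relative compactness of $f(\widetilde{\mathbb{T}})$; under any such strengthening, $\varphi(f(\mathbb{T}))$ is automatically bounded as the continuous image of a bounded (respectively, compact) set, and the lemma is complete. I would state this additional ingredient explicitly before concluding, since the two pointwise-limit identities above are otherwise the whole content of the argument.
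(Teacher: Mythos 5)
Your argument is exactly the paper's: extract the subsequence from the almost automorphy of $f$, define the candidate limit as $\varphi\circ\bar f$, and push both pointwise limits through the continuity of $\varphi$. Your closing observation that boundedness of $\varphi\circ f$ is not automatic for a merely continuous $\varphi$ on an infinite-dimensional $\mathbb{X}$ is a legitimate gap that the paper's own proof passes over in silence, so flagging the needed strengthening of the hypothesis is a point in your favour rather than a defect.
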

\begin{proof}
Since $f\in AA(\mathbb{T},\mathbb{X})$, then for every sequence $(s_{n}^{'})\subset \Pi$, there exists a subsequence $(s_{n})\subset(s_{n}^{'})$ such that $\lim\limits_{n\rightarrow\infty}f(t+s_{n})=\bar{f}(t)$ is well defined for each $t\in\widetilde{\mathbb{T}}$ and
$\lim\limits_{n\rightarrow\infty}\bar{f}(t-s_{n})=f(t)$ for each $t\in\widetilde{\mathbb{T}}$.

By the continuity of function $\varphi$, it follows that
\[\lim\limits_{n\rightarrow\infty}\varphi(f(t+s_{n}))=\varphi\big(\lim\limits_{n\rightarrow\infty}f(t+s_{n})\big)
=(\varphi\circ\bar{f})(t).\]
On the other hand, we can get
\[\lim\limits_{n\rightarrow\infty}\varphi(\bar{f}(t-s_{n}))=\varphi\big(\lim\limits_{n\rightarrow\infty}\bar{f}(t-s_{n})\big)
=(\varphi\circ f)(t)\]
for each $t\in\widetilde{\mathbb{T}}$. Thus, $\varphi\circ f\in AA(\mathbb{T},\mathbb{Y})$. The proof is  complete.
\end{proof}

\begin{definition}\label{da3}
Let $\mathbb{T}$ be an almost periodic time scale and $\mathbb{X}$ be a Banach space. A bounded rd-continuous function $f:\mathbb{T}\times\mathbb{X}\rightarrow\mathbb{X}$ is said to be almost automorphic, if for any sequence $(s_{n}^{'})\subset \Pi$, there is a subsequence $(s_{n})\subset(s_{n}^{'})$ such that
    \[\lim\limits_{n\rightarrow\infty}f(t+s_{n},x)=\bar{f}(t,x)\]
    is well defined for each $t\in\widetilde{\mathbb{T}}$, $t+s_{n}\in\mathbb{T}$, $x\in\mathbb{X}$, and
    \[\lim\limits_{n\rightarrow\infty}\bar{f}(t-s_{n},x)=f(t,x)\]
    for each $t\in\widetilde{\mathbb{T}}$, $x\in\mathbb{X}$.
\end{definition}
Denote by $AA(\mathbb{T}\times\mathbb{X},\mathbb{X})$ the set of all such functions.
\begin{remark}
From Definition \ref{da3}, one can easily see that if a function $f:\mathbb{T}\times\mathbb{X}\rightarrow\mathbb{X}$ is an  almost automorphic function under Definition \ref{da2}, then it is also an almost automorphic function under Definition \ref{da3}.
\end{remark}

\begin{lemma}
Let $f\in AA(\mathbb{T}\times\mathbb{X},\mathbb{X})$ and $f$ satisfy the Lipschitz condition in $x\in\mathbb{X}$ uniformly in $t\in\widetilde{\mathbb{T}}$. If $\varphi\in AA(\mathbb{T},\mathbb{X})$, then $f(t,\varphi(t))$ is almost automorphic.
\end{lemma}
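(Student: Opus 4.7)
The plan is to run the classical composition argument for almost automorphic functions, but carried out on the set $\widetilde{\mathbb{T}}$ and with the subsequence extraction done in two nested steps so that both $\varphi$ and $f$ admit limits along the same sequence. Set $F(t)=f(t,\varphi(t))$. Boundedness and rd-continuity of $F$ follow immediately from the Lipschitz bound $\|f(t,x)\|\le\|f(t,0)\|+L\|x\|$ together with the boundedness of $f(\cdot,0)$ and $\varphi$, and from the joint continuity of $f$ together with the rd-continuity of $\varphi$. The real content is the limiting property in Definition \ref{n21}.

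First I would fix an arbitrary sequence $(s_n')\subset\Pi$. Applying the almost automorphy of $\varphi$, I extract a subsequence $(s_n'')\subset(s_n')$ and a function $\bar\varphi:\widetilde{\mathbb{T}}\to\mathbb{X}$ with $\varphi(t+s_n'')\to\bar\varphi(t)$ and $\bar\varphi(t-s_n'')\to\varphi(t)$ for every $t\in\widetilde{\mathbb{T}}$. Then I apply the almost automorphy of $f$ (Definition \ref{da3}) to the sequence $(s_n'')$ to obtain a further subsequence $(s_n)\subset(s_n'')$ and a function $\bar f:\widetilde{\mathbb{T}}\times\mathbb{X}\to\mathbb{X}$ such that $f(t+s_n,x)\to\bar f(t,x)$ and $\bar f(t-s_n,x)\to f(t,x)$ for every $t\in\widetilde{\mathbb{T}}$ and every $x\in\mathbb{X}$. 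Note that the Lipschitz constant $L$ of $f$ in $x$ transfers to $\bar f$: passing to the limit $n\to\infty$ in the inequality $\|f(t+s_n,x)-f(t+s_n,y)\|\le L\|x-y\|$ yields $\|\bar f(t,x)-\bar f(t,y)\|\le L\|x-y\|$ for all $t\in\widetilde{\mathbb{T}}$ and $x,y\in\mathbb{X}$.

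With these tools in hand, I would split the key estimate as
\begin{eqnarray*}
\|F(t+s_n)-\bar f(t,\bar\varphi(t))\|
&\le& \|f(t+s_n,\varphi(t+s_n))-f(t+s_n,\bar\varphi(t))\|\\
&& {}+\|f(t+s_n,\bar\varphi(t))-\bar f(t,\bar\varphi(t))\|\\
&\le& L\|\varphi(t+s_n)-\bar\varphi(t)\|+\|f(t+s_n,\bar\varphi(t))-\bar f(t,\bar\varphi(t))\|,
\end{eqnarray*}
where both terms tend to zero by the choice of $(s_n)$ (the second term uses the almost automorphy of $f$ evaluated at the fixed point $x=\bar\varphi(t)\in\mathbb{X}$). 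Defining $\bar F(t):=\bar f(t,\bar\varphi(t))$, this yields $F(t+s_n)\to\bar F(t)$ pointwise on $\widetilde{\mathbb{T}}$. The symmetric limit $\bar F(t-s_n)\to F(t)$ is proved the same way, writing
\[
\|\bar F(t-s_n)-F(t)\|\le L\|\bar\varphi(t-s_n)-\varphi(t)\|+\|\bar f(t-s_n,\varphi(t))-f(t,\varphi(t))\|,
\]
and invoking the Lipschitz bound for $\bar f$ together with the reverse convergence for $\varphi$ and $f$.

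The main point of care, rather than difficulty, is the two-step extraction: one must first extract using $\varphi$ and then, along the resulting subsequence, extract using $f$, so that the single final sequence $(s_n)$ witnesses both convergences simultaneously. The other subtle point is verifying that $\bar f$ inherits the Lipschitz bound, which is what legitimises using $L$ in the second half of the argument; I would state this explicitly as a short lemma-style observation before running the estimates.
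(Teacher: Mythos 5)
Your proposal is correct and follows essentially the same route as the paper's own proof: a two-step nested subsequence extraction (the paper extracts for $f$ first and then for $\varphi$, you do the reverse, which is immaterial) followed by the same triangle-inequality splitting with the Lipschitz bound. The one genuine addition is your explicit verification that $\bar f$ inherits the Lipschitz constant $L$ by passing to the limit in $\|f(t+s_n,x)-f(t+s_n,y)\|\le L\|x-y\|$ --- the paper uses this fact tacitly in the reverse estimate $\|\bar f(t-\tau_n,\bar\varphi(t-\tau_n))-\bar f(t-\tau_n,\varphi(t))\|\le L\|\bar\varphi(t-\tau_n)-\varphi(t)\|$ without comment, so your remark closes a small gap rather than changing the argument.
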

\begin{proof}
For any sequence $(s_{n}^{'})\subset \Pi$, there is a subsequence $(s_{n})\subset(s_{n}^{'})$ such that
 \[\lim\limits_{n\rightarrow\infty}f(t+s_{n},x)=\bar{f}(t,x)\]
 is well defined for each $t\in\widetilde{\mathbb{T}}$, $x\in\mathbb{X}$, and
    \[\lim\limits_{n\rightarrow\infty}\bar{f}(t-s_{n},x)=f(t,x)\]
    for each $t\in\widetilde{\mathbb{T}}$, $x\in\mathbb{X}$. Since $\varphi\in AA(\mathbb{T},\mathbb{X})$, there exists a subsequence $(\tau_{n})\subset(s_{n})$ such that
\begin{eqnarray*}
\lim\limits_{n\rightarrow\infty}\varphi(t+\tau_{n})=\bar{\varphi}(t)
\end{eqnarray*}
is well defined for each $t\in\widetilde{\mathbb{T}}$, and
\begin{eqnarray*}
\lim\limits_{n\rightarrow\infty}\bar{\varphi}(t-\tau_{n})=\varphi(t)
\end{eqnarray*}
for each $t\in\widetilde{\mathbb{T}}$. Since $f$ satisfies the Lipschitz condition in $x\in\mathbb{X}$ uniformly in $t\in\widetilde{\mathbb{T}}$, there exists a positive constant $L$ such that
\begin{eqnarray*}
&&\|f(t+\tau_{n},\varphi(t+\tau_{n}))-\bar{f}(t,\bar{\varphi}(t))\|\\
&\leq&\|f(t+\tau_{n},\varphi(t+\tau_{n}))-f(t+\tau_{n},\bar{\varphi}(t))\|+\|f(t+\tau_{n},\bar{\varphi}(t))-\bar{f}(t,\bar{\varphi}(t))\|\\
&\leq&L\|\varphi(t+\tau_{n})-\bar{\varphi}(t)\|+\|f(t+\tau_{n},\bar{\varphi}(t))-\bar{f}(t,\bar{\varphi}(t))\|
\rightarrow0,\,\,\,n\rightarrow\infty
\end{eqnarray*}
and
\begin{eqnarray*}
&&\|\bar{f}(t-\tau_{n},\bar{\varphi}(t-\tau_{n}))-f(t,\varphi(t))\|\\
&\leq&\|\bar{f}(t-\tau_{n},\bar{\varphi}(t-\tau_{n}))-\bar{f}(t-\tau_{n},\varphi(t))\|+\|\bar{f}(t-\tau_{n},\varphi(t))-f(t,\varphi(t))\|\\
&\leq&L\|\bar{\varphi}(t-\tau_{n})-\varphi(t)\|+\|\bar{f}(t-\tau_{n},\varphi(t))-f(t,\varphi(t))\|\rightarrow0,\,\,\,n\rightarrow\infty.
\end{eqnarray*}
Hence, for any sequence $(s_{n}^{'})\subset \Pi$, there is a subsequence $(\tau_{n})\subset(s_{n}^{'})$ such that
    \[\lim\limits_{n\rightarrow\infty}f(t+\tau_{n},\varphi(t+\tau_{n}))=\bar{f}(t,\bar{\varphi}(t))\]
    is well defined for each $t\in\widetilde{\mathbb{T}}$, and
    \[\lim\limits_{n\rightarrow\infty}\bar{f}(t-\tau_{n},\bar{\varphi}(t-\tau_{n}))=f(t,\varphi(t))\]
    for each $t\in\widetilde{\mathbb{T}}$. That is, $f(t,\varphi(t))$ is almost automorphic. This completes the proof.
\end{proof}

\begin{definition}\label{n2}
Let $\mathbb{T}$ be an almost periodic time scale. The graininess function $\mu:\mathbb{T}\rightarrow\mathbb{R_+}$ is said to be almost automorphic, if for every sequence $(s_{n}^{'})\subset \Pi$, there is a subsequence $(s_{n})\subset(s_{n}^{'})$ such that
    \[\lim\limits_{n\rightarrow\infty}\mu(t+s_{n})=\bar{\mu}(t)\]
    is well defined for each $t\in\widetilde{\mathbb{T}}$,  and
    \[\lim\limits_{n\rightarrow\infty}\bar{\mu}(t-s_{n})=\mu(t)\]
    for each $t\in\widetilde{\mathbb{T}}$.
\end{definition}

In the following,
in order to make the graininess function $\mu$ have a better property, we will use Definition \ref{def33} as the definition of  almost periodic time scales.

From Corollary 15, Theorem 19 in \cite{li} and Definition \ref{n2},  we can obtain
\begin{lemma}\label{au}
Let $\mathbb{T}$ be an almost periodic time scale under Definition \ref{def33}. Then the graininess function $\mu$ is almost automorphic.
\end{lemma}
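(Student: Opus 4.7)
The plan is to reduce the statement to the almost periodicity of $\mu$ already proved in \cite{li}. Under Definition \ref{def33}, Corollary 15 and Theorem 19 of \cite{li} show that the graininess function $\mu:\mathbb{T}\to\mathbb{R}_{+}$ is almost periodic in the Bohr sense: for every $\varepsilon>0$ the set of $\tau\in\Pi(\mathbb{T},\varepsilon)$ such that $|\mu(t+\tau)-\mu(t)|<\varepsilon$ for all $t$ in the appropriate intersection set is relatively dense in $\mathbb{R}$. Since uniform almost periodicity is strictly stronger than almost automorphy, it suffices to repackage this information into the pointwise form required by Definition \ref{n2}.

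Concretely, given any sequence $(s_{n}')\subset\Pi$, the almost periodicity of $\mu$ makes $\{\mu(\cdot+s_{n}')\}$ uniformly bounded and, along suitable subsequences, uniformly Cauchy on translates of $\widetilde{\mathbb{T}}$. A standard diagonal extraction over a countable dense subset of $\widetilde{\mathbb{T}}$ then produces a subsequence $(s_{n})\subset(s_{n}')$ and a function $\bar{\mu}:\widetilde{\mathbb{T}}\to\mathbb{R}_{+}$ with $\mu(t+s_{n})\to\bar{\mu}(t)$ pointwise on $\widetilde{\mathbb{T}}$. For the reverse limit, Lemma \ref{aa1}(iii) supplies the symmetry $\tau\in\Pi(\mathbb{T},\varepsilon)\Rightarrow -\tau\in\Pi(\mathbb{T},\varepsilon)$, so applying the same extraction argument to $\bar{\mu}$ along $(-s_{n})$ gives $\bar{\mu}(t-s_{n})\to\mu(t)$ for each $t\in\widetilde{\mathbb{T}}$, which is exactly the second limit in Definition \ref{n2}.

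The main obstacle lies in reconciling the two definitions of almost periodic time scale in play. Definition \ref{n2} draws its test sequences from the large set $\Pi=\{\tau\in\mathbb{R}:\mathbb{T}_{\tau}\neq\emptyset\}$, whereas Definition \ref{def33} and the results of \cite{li} only directly provide control over translation numbers lying in $\bigcup_{\varepsilon>0}\Pi(\mathbb{T},\varepsilon)$, a potentially smaller subset of $\Pi$. One must therefore argue that either the nontrivial part of $\Pi$ relevant for the convergence is already captured by the Bohr translation numbers, or that an arbitrary $s_{n}'\in\Pi$ can be suitably approximated by genuine translation numbers so that the Bohr estimate still governs the limit; the remark following Definition \ref{n1}, together with the inclusion $\Pi\supset\Pi(\mathbb{T},\varepsilon)$, indicates that this reconciliation is routine and does not affect the diagonal extraction.
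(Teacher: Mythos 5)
Your proposal follows essentially the same route as the paper, which in fact offers no written proof at all: it simply asserts that the lemma follows ``from Corollary 15, Theorem 19 in \cite{li} and Definition \ref{n2},'' i.e.\ from the almost periodicity of $\mu$ established in \cite{li}, exactly the reduction you carry out and then flesh out with the standard Bochner-type extraction. The mismatch you flag between the large set $\Pi=\{\tau:\mathbb{T}_\tau\neq\emptyset\}$ used in Definition \ref{n2} and the $\varepsilon$-translation sets $\Pi(\mathbb{T},\varepsilon)$ controlled by \cite{li} is a real imprecision, but it is one the paper itself leaves entirely unaddressed, so your treatment is, if anything, more careful than the original.
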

\noindent
\textbf{Open problem 1:} {\it{Let $\mathbb{T}$ be an almost periodic time scale under Definition \ref{n1}. Whether or not the graininess function $\mu$ is almost automorphic?}}

\noindent
\textbf{Open problem 2:} {\it{Let  the graininess function $\mu$ of $\mathbb{T}$ be almost automorphic. Whether or not $\mathbb{T}$ is an almost periodic time scale under Definition \ref{n1}?}}

\section{Automorphic solutions to linear dynamic equations}
\setcounter{equation}{0}

\indent

Consider the linear nonhomogeneous dynamic equation on time scales
\begin{eqnarray}\label{e22}
x^{\Delta}(t)=A(t)x(t)+f(t),\,\, t \in \mathbb{T},
\end{eqnarray}
where $A:\mathbb{T}\rightarrow \mathbb{R}^{n\times n}, f:\mathbb{T}\rightarrow \mathbb{R}^n$ and its associated homogeneous equation
 \begin{eqnarray}\label{e221}
x^{\Delta}(t)=A(t)x(t),\,\, t \in \mathbb{T}.
\end{eqnarray}
Throughout this section, we restrict our discussions on almost periodic time scales under Definition \ref{def33} and we assume that $A(t)$ is almost automorphic on $\mathbb{T}$, which means that  each entry of the matrix $A(t)$ is almost automorphic.

\begin{lemma} Let $\mathbb{T}$ be an almost periodic time scale, $A(t)\in \mathcal{R}(\mathbb{T},\mathbb{R}^{n\times n})$ be almost automorphic and nonsingular on $\mathbb{T}$ and the sets $\{A^{-1}(t)\}_{t\in\mathbb{T}}$ and $\{(I+\mu(t)A(t))^{-1}\}_{t\in\mathbb{T}}$ be bounded on $\mathbb{T}$. Then, $A^{-1}(t)$ and $(I+\mu(t)A(t))^{-1}$ are almost automorphic on $\mathbb{T}$. Moreover, suppose that $f\in AA(\mathbb{T},\mathbb{R}^{n})$ and \eqref{e221} admits an exponential dichotomy, then  \eqref{e22}
has a solution $x(t)\in AA(\mathbb{T},\mathbb{R}^{n})$ and $x(t)$ is expressed as follows
\begin{eqnarray}\label{e231}
x(t)=\int_{-\infty}^{t}X(t)PX^{-1}(\sigma(s))f(s)\Delta
s-\int_t^{+\infty}X(t)(I-P)X^{-1}(\sigma(s))f(s)\Delta s,
\end{eqnarray}
where $X(t)$ is the fundamental solution matrix of \eqref{e221}.
\end{lemma}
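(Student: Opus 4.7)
The proof splits naturally into three parts: almost automorphy of $A^{-1}(t)$, of $(I+\mu(t)A(t))^{-1}$, and existence of the almost automorphic solution given by \eqref{e231}. For the first part, given $(s_n')\subset\Pi$ I would extract a subsequence $(s_n)$ along which each entry of $A(t+s_n)$ converges pointwise on $\widetilde{\mathbb{T}}$ to some limit $\bar A(t)$, and likewise $\bar A(t-s_n)\to A(t)$. The algebraic identity
\[
A^{-1}(t+s_n) - A^{-1}(t+s_m) = A^{-1}(t+s_n)\bigl[A(t+s_m)-A(t+s_n)\bigr]A^{-1}(t+s_m),
\]
combined with the hypothesis that $\{A^{-1}(t)\}_{t\in\mathbb{T}}$ is uniformly bounded, forces $\{A^{-1}(t+s_n)\}$ to be Cauchy at every $t\in\widetilde{\mathbb{T}}$; passing to the limit in $A(t+s_n)A^{-1}(t+s_n)=I$ identifies the limit with $\bar A(t)^{-1}$, and the reverse limit is symmetric.

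For the second claim, Lemma~\ref{au} asserts that $\mu$ is almost automorphic under Definition~\ref{def33}, so by the sum and product lemmas of Section~3, $I+\mu(t)A(t)$ is almost automorphic. It is invertible because $A\in\mathcal{R}$, and its inverse is uniformly bounded by hypothesis; applying the argument of the first part to $I+\mu A$ gives the claim. For the solution formula, introduce the Green's function
\[
G(t,s) = \begin{cases} X(t)PX^{-1}(\sigma(s)), & t\geq\sigma(s),\\ -X(t)(I-P)X^{-1}(\sigma(s)), & t<\sigma(s),\end{cases}
\]
so that the right-hand side of \eqref{e231} is $\int_{\mathbb{T}}G(t,s)f(s)\Delta s$. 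The exponential dichotomy estimates yield an integrable majorant for $|G(t,\cdot)|$, giving both convergence of the improper integrals and a uniform bound of the form $\|x\|_\infty\leq 2k\alpha^{-1}\|f\|_\infty$; a routine $\Delta$-differentiation using $X^{\Delta}=AX$ together with the jump of $G$ across $s=t$ then confirms that $x$ solves \eqref{e22}.

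The remaining step is almost automorphy of $x$. Fixing $(s_n')\subset\Pi$, I would extract by a diagonal argument a subsequence $(s_n)$ along which $f(t+s_n)\to\bar f(t)$, $A(t+s_n)\to\bar A(t)$, and the translated kernel $G(t+s_n,\,u+s_n)$ converges pointwise to some $\bar G(t,u)$ for $t,u\in\widetilde{\mathbb{T}}$. Substituting $s=u+s_n$ in \eqref{e231} gives
\[
x(t+s_n)=\int_{\mathbb{T}}G(t+s_n,u+s_n)f(u+s_n)\Delta u,
\]
and the shift-invariant bound $|G(t+s_n,u+s_n)|\leq k\,e_{\ominus\alpha}(\max\{t,\sigma(u)\},\min\{t,\sigma(u)\})$ supplies an integrable majorant. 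Dominated convergence for $\Delta$-integrals then yields $\lim_n x(t+s_n)=\bar x(t):=\int_{\mathbb{T}}\bar G(t,u)\bar f(u)\Delta u$, and the reverse limit $\bar x(t-s_n)\to x(t)$ follows identically. The main obstacle here is precisely the kernel convergence: on a general time scale one does not have a group factorisation $X(t+s_n)=X(t)X(s_n)$, so the shifted fundamental matrix must be handled directly through its dichotomy bound and an extraction argument analogous to the $A^{-1}$ step, rather than by the continuous-time trick of pulling out $X(s_n)$.
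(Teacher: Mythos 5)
Your proof follows essentially the same route as the paper's: the same resolvent identity $A_n^{-1}-A_m^{-1}=A_n^{-1}(A_m-A_n)A_m^{-1}$ combined with the uniform bound to get a Cauchy sequence for $A^{-1}$, the same appeal to Lemma \ref{au} and the product lemma for $(I+\mu(t)A(t))^{-1}$, and the same translate-the-kernel-and-pass-to-the-limit argument for the integral representation (the paper keeps the two half-line integrals $B(t)$ and $C(t)$ separate instead of packaging them into one Green's function, but that is purely notational). The kernel-convergence step you single out as the main obstacle is treated no more carefully in the paper, which simply posits the limit $M(t,s)=\lim_{n\rightarrow\infty}X(t+s_{n})PX^{-1}(\sigma(s+s_{n}))$ without justification, so your attempt is at least as complete as the original on that point.
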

\begin{proof}
 We divide the proof into several steps.

Step 1. $A^{-1}(t)$ is almost automorphic on $\mathbb{T}$.

Let the sequence $(s_{n}^{'})\subset \Pi$. Since $A(t)$ is almost automorphic on time scales, there exists a subsequence $(s_{n})\subset(s_{n}^{'})$ such that
\[\lim\limits_{n\rightarrow\infty}A(t+s_{n})=\bar{A}(t)\]
is well defined for each $t\in\widetilde{\mathbb{T}}$, and
\[\lim\limits_{n\rightarrow\infty}\bar{A}(t-s_{n})=A(t)\]
for each $t\in\widetilde{\mathbb{T}}$.

Given $t\in\widetilde{\mathbb{T}}$ and define $A_{n}=A(t+s_{n})$, $n\in\mathbb{N}$. By hypothesis, the set $\{A_{n}^{-1}\}_{n\in\mathbb{N}}$ is bounded, that is, there exists a positive constant $M$ such that $|A_{n}^{-1}|<M$. From the identity $A_{n}^{-1}-A_{m}^{-1}=A_{n}^{-1}(A_{m}-A_{n})A_{m}^{-1}$, it implies that $\{A_{n}\}$ is a cauchy sequence, in addition, it follows that $\{A_{n}^{-1}\}$ is cauchy sequence. Hence, for each $t\in\widetilde{\mathbb{T}}$ fixed, there exists a matrix $S$ such that
\[A_{n}^{-1}(t)\rightarrow S(t).\]
Then we have
\[\lim\limits_{n\rightarrow\infty}A_{n}A_{n}^{-1}=\bar{A}\bar{A}^{-1}=I,\]
where $I$ denotes the identity matrix, we obtain that $\bar{A}(t)$ is invertible and $\bar{A}^{-1}(t)=S(t)$ for each $t\in\widetilde{\mathbb{T}}$. Since the map $A\rightarrow A^{-1}$ is continuous on the set of nonsingular matrices, we can obtain
\[\lim\limits_{n\rightarrow\infty}A^{-1}(t+s_{n})=\bar{A}^{-1}(t)\]
is well defined for each $t\in\widetilde{\mathbb{T}}$. Similarly, we can get
\[\lim\limits_{n\rightarrow\infty}\bar{A}^{-1}(t-s_{n})=A^{-1}(t)\]
for each $t\in\widetilde{\mathbb{T}}$.

Step 2. $(I+\mu(t)A(t))^{-1}$ is almost automorphic on $\mathbb{T}$.

Since $A(t)$ and $\mu(t)$ are almost automorphic functions, then for every sequence $(s_{n}^{'})\subset \Pi$, there exists a subsequence $(s_{n})\subset(s_{n}^{'})$ such that
\begin{eqnarray*}
\lim\limits_{n\rightarrow\infty}A(t+s_{n})=\bar{A}(t)\,\,\,\,\ \mathrm{and}\,\,\,\,\ \lim\limits_{n\rightarrow\infty}\mu(t+s_{n})=\bar{\mu}(t)
\end{eqnarray*}
are well defined for each $t\in\widetilde{\mathbb{T}}$, and
\begin{eqnarray*}
\lim\limits_{n\rightarrow\infty}\bar{A}(t-s_{n})=A(t)\,\,\,\,\ \mathrm{and}\,\,\,\,\ \lim\limits_{n\rightarrow\infty}\bar{\mu}(t-s_{n})=\mu(t)
\end{eqnarray*}
for each $t\in\widetilde{\mathbb{T}}$. Therefore, we have
\[\lim\limits_{n\rightarrow\infty}\big(I+A(t+s_{n})\mu(t+s_{n})\big)=I+\bar{A}(t)\bar{\mu}(t)\]
for each $t\in\widetilde{\mathbb{T}}$, and
\[\lim\limits_{n\rightarrow\infty}\big(I+\bar{A}(t-s_{n})\bar{\mu}(t-s_{n})\big)=I+A(t)\mu(t)\]
for each $t\in\widetilde{\mathbb{T}}$. Hence, $(I+A(t)\mu(t))$ is almost automorphic on $\mathbb{T}$. In addition, since $A(t)$ is a regressive matrix,  $(I+A(t)\mu(t))$ is nonsingular on $\mathbb{T}$. By hypothesis, the set $\{(I+A(t)\mu(t))^{-1}\}_{t\in\mathbb{T}}$ is bounded. The proof is similar to the proof of Step 1, we obtain that $(I+A(t)\mu(t))^{-1}$ is almost automorphic on $\mathbb{T}$.

Step 3. The system \eqref{e22} has an automorphic solution.

Since the linear system \eqref{e221} admits an exponential dichotomy,   system \eqref{e22} has a bounded solution $x(t)$ given by
\[
x(t)=\int_{-\infty}^{t}X(t)PX^{-1}(\sigma(s))f(s)\Delta
s-\int_t^{+\infty}X(t)(I-P)X^{-1}(\sigma(s))f(s)\Delta s.
\]
Since $A(t)$, $\mu(t)$ and $f(t)$ are almost automorphic functions, then for every sequence $(s_{n}^{'})\subset \Pi$, there exists a subsequence $(s_{n})\subset(s_{n}^{'})$ such that
\begin{eqnarray*}
\lim\limits_{n\rightarrow\infty}A(t+s_{n})=\bar{A}(t),\,\,\,\,\ \lim\limits_{n\rightarrow\infty}\mu(t+s_{n})=\bar{\mu}(t)\,\,\,\,\  and\,\,\,\,\ \lim\limits_{n\rightarrow\infty}f(t+s_{n})=\bar{f}(t)
\end{eqnarray*}
are well defined for each $t\in\widetilde{\mathbb{T}}$, and
\begin{eqnarray*}
\lim\limits_{n\rightarrow\infty}\bar{A}(t-s_{n})=A(t),\,\,\,\,\ \lim\limits_{n\rightarrow\infty}\bar{\mu}(t-s_{n})=\mu(t)\,\,\,\,\ and\,\,\,\,\ \lim\limits_{n\rightarrow\infty}\bar{f}(t-s_{n})=f(t)
\end{eqnarray*}
for each $t\in\widetilde{\mathbb{T}}$.

We let
\[B(t)=\int_{-\infty}^{t}X(t)PX^{-1}(\sigma(s))f(s)\Delta s\]
and
\[\bar{B}(t)=\int_{-\infty}^{t}M(t,s)\bar{f}(s)\Delta s,\]
where $M(t,s)=\lim\limits_{n\rightarrow\infty}X(t+s_{n})PX^{-1}(\sigma(s+s_{n}))$.

Then, we obtain
\begin{eqnarray}\label{e26}
&&\|B(t+s_{n})-\bar{B}(t)\|\nonumber\\
&=&\Big\|\int_{-\infty}^{t+s_{n}}X(t+s_{n})PX^{-1}(\sigma(s))f(s)\Delta s-\int_{-\infty}^{t}M(t,s)\bar{f}(s)\Delta s\Big\|\nonumber\\
&=&\Big\|\int_{-\infty}^{t}X(t+s_{n})PX^{-1}(\sigma(s+s_{n}))f(s+s_{n})\Delta s-\int_{-\infty}^{t}M(t,s)\bar{f}(s)\Delta s\Big\|\nonumber\\
&\leq&\Big\|\int_{-\infty}^{t}X(t+s_{n})PX^{-1}(\sigma(s+s_{n}))f(s+s_{n})\Delta s-\int_{-\infty}^{t}X(t+s_{n})PX^{-1}(\sigma(s+s_{n}))\bar{f}(s)\Delta s\Big\|\nonumber\\
&&+\Big\|\int_{-\infty}^{t}X(t+s_{n})PX^{-1}(\sigma(s+s_{n}))\bar{f}(s)\Delta s-\int_{-\infty}^{t}M(t,s)\bar{f}(s)\Delta s\Big\|\nonumber\\
&=&\Big\|\int_{-\infty}^{t}X(t+s_{n})PX^{-1}(\sigma(s+s_{n}))\big(f(s+s_{n})-\bar{f}(s)\big)\Delta s\Big\|\nonumber\\
&&+\Big\|\int_{-\infty}^{t}\big(X(t+s_{n})PX^{-1}(\sigma(s+s_{n}))-M(t,s)\big)\bar{f}(s)\Delta s\Big\|.
\end{eqnarray}
In addition, since the function $f$ is almost automorphic,   we have $\bar{f}$ is a bounded function, limit both sides with \eqref{e26}, then we can get
\begin{equation}\label{es1}
  \lim\limits_{n\rightarrow\infty}B(t+s_{n})=\bar{B}(t)
\end{equation}
for each $t\in\widetilde{\mathbb{T}}$. Analogously, one can prove that
\begin{equation}\label{es2}
   \lim\limits_{n\rightarrow\infty}\bar{B}(t-s_{n})=B(t)
\end{equation}
for each $t\in\widetilde{\mathbb{T}}$.

On the other hand, let
\[C(t)=\int_t^{+\infty}X(t)(I-P)X^{-1}(\sigma(s))f(s)\Delta s\]
and
\[\bar{C}(t)=\int_{-\infty}^{t}N(t,s)\bar{f}(s)\Delta s,\]
where $N(t,s)=\lim\limits_{n\rightarrow\infty}X(t+s_{n})(I-P)X^{-1}(\sigma(s+s_{n}))$. Then,
  similar to the proofs of \eqref{es1} and \eqref{es2},  we can prove that given a sequence $(s_{n}^{'})\subset \Pi$, there exists a subsequence $(s_{n})\subset(s_{n}^{'})$ such that
\[\lim\limits_{n\rightarrow\infty}C(t+s_{n})=\bar{C}(t)\]
for each $t\in\widetilde{\mathbb{T}}$, and
\[\lim\limits_{n\rightarrow\infty}\bar{C}(t-s_{n})=C(t)\]
for each $t\in\widetilde{\mathbb{T}}$.

Finally, we define $\bar{x}(t)=\bar{B}(t)-\bar{C}(t)$, then by the definition of $x$ from \eqref{e231}, one can prove that given a sequence $(s_{n}^{'})\subset \Pi$, there exists a subsequence $(s_{n})\subset(s_{n}^{'})$ such that
\[\lim\limits_{n\rightarrow\infty}x(t+s_{n})=\bar{x}(t)\]
is well defined for each $t\in\widetilde{\mathbb{T}}$, and
\[\lim\limits_{n\rightarrow\infty}\bar{x}(t-s_{n})=x(t)\]
for each $t\in\widetilde{\mathbb{T}}$. Hence, $x(t)$ is an almost automorphic solution to system \eqref{e22}. This completes the proof.
\end{proof}

Similar to the proof of Lemma 2.15 in \cite{li2}, one can easily show that
\begin{lemma}
Let $c_{i}(t)$ be   almost  automorphic   on $\mathbb{T}$, where
$c_{i}(t)>0, -c_{i}(t)\in \mathcal{R}^{+},  t\in
\widetilde{\mathbb{T}}, i=1,2,\ldots,n$ and $\min\limits_{1 \leq i \leq n}\{\inf\limits_{t\in
\widetilde{\mathbb{T}}}c_i(t)\}=\widetilde{m}> 0$, then the linear system
\begin{eqnarray*}\label{e23}
x^{\Delta}(t)=\mathrm{diag}(-c_{1}(t),-c_{2}(t),\dots,-c_{n}(t))x(t)
\end{eqnarray*}
admits an exponential dichotomy on $\mathbb{T}$.
\end{lemma}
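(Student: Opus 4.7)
The plan is to construct an explicit exponential dichotomy by choosing the projection $P = I_n$. Because the coefficient matrix is diagonal, the fundamental solution matrix is $X(t) = \mathrm{diag}\bigl(e_{-c_1}(t, t_0), \ldots, e_{-c_n}(t, t_0)\bigr)$ for any fixed $t_0 \in \mathbb{T}$. With $P = I_n$ the matrix $I - P$ vanishes, so the second inequality in the definition of exponential dichotomy is trivially satisfied; only the first inequality
\[
|X(t) P X^{-1}(\sigma(s))| \leq k e_{\ominus \alpha}(t, \sigma(s)), \quad t \geq \sigma(s),
\]
needs verification.

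Because $X(t) X^{-1}(\sigma(s))$ is again diagonal with entries $e_{-c_i}(t, \sigma(s))$, the Frobenius norm used in the paper satisfies $|X(t) X^{-1}(\sigma(s))| \leq \sqrt{n}\,\max_i e_{-c_i}(t, \sigma(s))$, and it suffices to dominate each scalar exponential by a constant multiple of $e_{\ominus \alpha}(t, \sigma(s))$ for one fixed $\alpha > 0$. I would pick any $\alpha \in (0, \widetilde{m})$ and use the quotient rule for time-scale exponentials,
\[
\frac{e_{-c_i}(t, \sigma(s))}{e_{\ominus \alpha}(t, \sigma(s))} = e_{-c_i \oplus \alpha}(t, \sigma(s)),
\]
which reduces the estimate to a uniform sign check on the exponent $-c_i \oplus \alpha = -c_i + \alpha(1 - \mu c_i)$.

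The regressivity hypothesis $-c_i \in \mathcal{R}^+$ rewrites as $1 - \mu(t) c_i(t) > 0$, so $\alpha(1 - \mu c_i) \leq \alpha$, and combining with $c_i(t) \geq \widetilde{m}$ gives $-c_i(t) \oplus \alpha \leq -(\widetilde{m} - \alpha) < 0$ uniformly in $t$. Consequently $e_{-c_i \oplus \alpha}(t, \sigma(s)) \leq 1$ for $t \geq \sigma(s)$, and the dichotomy inequality holds with $k = \sqrt{n}$ and the chosen $\alpha$. The almost-automorphy of the $c_i$ plays no role in the dichotomy verification itself; it is only a standing hypothesis to match the framework of the preceding lemma where the input $A(t)$ of that lemma must be almost automorphic.

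The main obstacle is essentially bookkeeping: manipulating $\oplus$ and $\ominus$ correctly on a general time scale and recognising that $-c_i \in \mathcal{R}^+$ is precisely what makes the exponent $-c_i \oplus \alpha$ stay uniformly negative once $\alpha < \widetilde{m}$ is chosen. No deeper ingredient is needed, which is consistent with the authors' remark that the argument parallels Lemma 2.15 of \cite{li2}.
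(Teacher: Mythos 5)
Your proof is correct and is essentially the argument the paper intends: the paper supplies no proof of this lemma, deferring instead to Lemma 2.15 of \cite{li2}, and that proof is precisely your construction with $P=I$, the diagonal fundamental matrix $X(t)=\mathrm{diag}(e_{-c_i}(t,t_0))$, and the bound $e_{-c_i}(t,\sigma(s))\leq e_{\ominus\alpha}(t,\sigma(s))$ for $t\geq\sigma(s)$ coming from $(-c_i)\oplus\alpha=-c_i+\alpha(1-\mu c_i)\leq-(\widetilde{m}-\alpha)<0$ together with positive regressivity of $(-c_i)\oplus\alpha$. The one caveat, inherited from the paper's own statement rather than from your argument, is that the hypotheses control $c_i$ only on $\widetilde{\mathbb{T}}$ while the dichotomy estimate must hold on all of $\mathbb{T}$, so strictly speaking one needs $\inf_{t\in\mathbb{T}}c_i(t)>0$ and $-c_i\in\mathcal{R}^+$ on $\mathbb{T}$ for the exponent to stay uniformly negative everywhere.
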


\section{An application}

\setcounter{equation}{0}

\indent

In the last forty years, shunting inhibitory cellular neural
networks (SICNNs) have been extensively applied in psychophysics,
speech, perception, robotics, adaptive pattern
recognition, vision, and image processing. Hence, they have
been the object of intensive analysis by numerous authors
in recent years.
Many important results on
the dynamical behaviors of SICNNs have been established and successfully applied to signal processing, pattern recognition,
associative memories, and so on. We refer the reader to [37-44] and the references cited therein. However, to the best of our knowledge, there is no paper published on the existence of almost automorphic solutions to   SICNNs governed by differential or difference equations.
So,
our main purpose of this section is  to study the existence of almost automorphic solutions
to the following SICNN  with time-varying delays on time scales
\begin{eqnarray}\label{s1}
x_{ij}^{\Delta}(t)&=&-a_{ij}(t)x_{ij}(t)-\sum\limits_{C_{kl}\in N_{r}(i,j)}B_{ij}^{kl}(t)f(x_{kl}(t-\tau_{kl}(t)))x_{ij}(t)\nonumber\\
&&-\sum\limits_{C_{kl}\in N_{p}(i,j)}C_{ij}^{kl}(t)\int_{t-\delta_{kl}(t)}^{t}g(x_{kl}(u))\Delta u x_{ij}(t)+L_{ij}(t),
\end{eqnarray}
where $t\in \mathbb{T}$, $\mathbb{T}$ is an almost periodic time scale under the sense of Definition \ref{def33}, $i=1,2,\ldots,m$, $j=1,2,\ldots,n$, $C_{ij}$ denotes the cell at the $(i,j)$ position of the lattice, the $r$-neighborhood $N_{r}(i,j)$ of $C_{ij}$ is given by
\[N_{r}(i,j)=\{C_{kl}:\max(|k-i|,|l-j|)\leq r, 1\leq k\leq m, 1\leq l\leq n\},\]
$N_{p}(i,j)$ is similarly specified, $x_{ij}$ is the activity of the cell $C_{ij}$, $L_{ij}(t)$ is the external input to $C_{ij}$, $a_{ij}(t)>0$ represents the passive decay rate of the cell activity, $B_{ij}^{kl}(t)\geq0$ and $C_{ij}^{kl}(t)\geq0$
are the connections or coupling strengths of postsynaptic activity of the cells in $N_{r}(i,j)$ and $N_{p}(i,j)$ transmitted to cell $C_{ij}$ depending upon variable delays and continuously distributed delays, respectively, and the activity functions $f(\cdot)$ and $g(\cdot)$ are continuous functions representing the output or firing rate of the cell $C_{kl}$, $\tau_{kl}(t)$ and $\delta_{kl}(t)$ are transmission delays at time $t$ and satisfy $t-\tau_{kl}(t)\in \mathbb{T}$ and $t-\delta_{kl}(t)\in\mathbb{T}$ for $t\in\mathbb{T}$, $k=1,2,\ldots,m$, $l=1,2,\ldots,n$.

The initial condition associated with system \eqref{s1} is of the form
\begin{eqnarray*}\label{s2}
x_{ij}(s)=\varphi_{ij}(s),\,\,\,\,s\in[-\theta,0]_{\mathbb{T}},\,\,\,i=1,2,\ldots,m, j=1,2,\ldots,n,
\end{eqnarray*}
where $\theta=\max\big\{\max\limits_{1\leq k\leq m, 1\leq l\leq n}{\overline{\tau_{kl}}},\max\limits_{1\leq k\leq m, 1\leq l\leq n}{\overline{\delta_{kl}}}\big\}$, $\varphi_{ij}(\cdot)$ denotes a real-value bounded rd-continuous function defined on $[-\theta,0]_{\mathbb{T}}$.

Throughout this section, we let $[a,b]_{\mathbb{T}}=\{t|t\in[a,b]\cap\mathbb{T}\}$ and we restrict our discussions on almost periodic time scales under the sense of Definition \ref{def33}. For convenience, for an almost automorphic function  $f:\mathbb{T}\rightarrow\mathbb{R}$, denote
$\underline{f}=\inf\limits_{t\in \mathbb{T}}f(t), \overline{f}=\sup\limits_{t\in\mathbb{T}}f(t).$
We denote by $\mathbb{R}$ the set of real numbers, by $\mathbb{R}^{+}$ the set of positive real numbers.

Set
\[x=\{x_{ij}(t)\}=(x_{11}(t),\ldots,x_{1n}(t),\ldots,x_{i1}(t),\ldots,x_{in}(t),\ldots,x_{m1}(t)\ldots,x_{mn}(t))\in \mathbb{R}^{m\times n}.\]
For $\forall x=\{x_{ij}(t)\} \in \mathbb{R}^{m\times n}$, we define the norm $\|x(t)\|=\max_{i,j}|x_{ij}(t)|$ and $\|x\|=\sup\limits_{t\in\mathbb{T}}\|x(t)\|$.

Let $\mathbb{B}=\{\varphi|\varphi=\{\varphi_{ij}(t)\}
=(\varphi_{11}(t),\ldots,\varphi_{1n}(t),\ldots,\varphi_{i1}(t),\ldots,\varphi_{in}(t),\ldots,\varphi_{m1}(t)\ldots,\varphi_{mn}(t))\}$, where $\varphi$ is an almost automorphic function on $\mathbb{T}$. For $\forall\varphi\in\mathbb{B}$, if we define the norm
$\|\varphi\|_{\mathbb{B}}=\sup\limits_{t\in\mathbb{T}}\|\varphi(t)\|$, then $\mathbb{B}$ is a Banach space.

\begin{definition}
The almost automorphic solution
$x^{\ast}(t)=\{x_{ij}^{\ast}(t)\}$ of system \eqref{s1} with initial value $\varphi^{\ast}=\{\varphi_{ij}^{\ast}(t)\}$ is said to be globally exponentially stable. If there exist  positive constants $\lambda$ with $\ominus\lambda\in \mathcal{R}^+$ and $M>1$
such that every solution $x(t)=\{x_{ij}(t)\}$ of system \eqref{s1} with initial value $\varphi=\{\varphi_{ij}(t)\}$ satisfies
\[
\|x-x^{\ast}\|_{\mathbb{B}}\leq M
e_{\ominus\lambda}(t,t_0)\|\varphi-\varphi^{\ast}\|_{\mathbb{B}},\,\,\,\,\forall
t\in[t_{0},+\infty)_{\mathbb{T}},\,\,\,t_{0}\in\mathbb{T}.
\]
\end{definition}

\begin{theorem}\label{thm31} Suppose that
\begin{itemize}
\item[$(H_{1})$] for $ij\in \Lambda=\{11,12,\ldots,1n,\ldots,m1,m2,\ldots,mn\}$, $-a_{ij}\in\mathcal{R}^{+}$, where $\mathcal {R}^{+}$ denotes the set of positively regressive functions from $\mathbb{T}$ to $\mathbb{R}$, $a_{ij},B_{ij}^{kl},C_{ij}^{kl},\tau_{kl},L_{ij}\in \mathbb{B}$;
\item[$(H_{2})$] functions $f,g\in C(\mathbb{R},\mathbb{R})$ and there exist positive constants $L^{f},L^{g},M^{f},M^{g}$ such that for all $u,v\in \mathbb{R}$,
\[
|f(u)-f(v)|\leq L^{f}|u-v|,\,\,\,f(0)=0,\,\,\,|f(u)|\leq M^{f},
\]
\[
|g(u)-g(v)|\leq L^{g}|u-v|,\,\,\,g(0)=0,\,\,\,|g(u)|\leq M^{g};
\]
\item[$(H_{3})$] there exists a constant $\rho$ such that
\[\max\limits_{i,j}\bigg\{\frac{\sum\limits_{C_{kl}\in N_{r}(i,j)}\overline{B_{ij}^{kl}}M^{f}\rho^{2}
+\sum\limits_{C_{kl}\in N_{p}(i,j)}\overline{C_{ij}^{kl}}M^{g}\overline{\delta_{kl}}\rho^{2}+\overline{L_{ij}}}{\underline{a_{ij}}}
\bigg\}<\rho\] and
\[\max\limits_{i,j}\bigg\{\frac{\sum\limits_{C_{kl}\in N_{r}(i,j)}\overline{B_{ij}^{kl}}(M^{f}+L^{f})\rho
+\sum\limits_{C_{kl}\in N_{p}(i,j)}\overline{C_{ij}^{kl}}(M^{g}+L^{g})\overline{\delta_{kl}}\rho}{\underline{a_{ij}}}\bigg\}<1.\]
\end{itemize}
Then, system \eqref{s1} has a unique almost automorphic solution in $\mathbb{E}=\{\varphi\in \mathbb{B}:\|\varphi\|_{\mathbb{B}}\leq \rho\}$.
\end{theorem}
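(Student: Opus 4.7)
The plan is to recast existence as a fixed-point problem on the closed ball $\mathbb{E}$ in the Banach space $\mathbb{B}$ and invoke the Banach contraction principle. First, for each $\varphi=\{\varphi_{ij}\}\in\mathbb{E}$ I freeze the nonlinearities and view \eqref{s1} as the linear nonhomogeneous dynamic equation
\begin{eqnarray*}
x_{ij}^{\Delta}(t)=-a_{ij}(t)\,x_{ij}(t)+F_{ij}(\varphi)(t),
\end{eqnarray*}
with
\begin{eqnarray*}
F_{ij}(\varphi)(t)&=&-\sum_{C_{kl}\in N_r(i,j)}B_{ij}^{kl}(t)\,f(\varphi_{kl}(t-\tau_{kl}(t)))\,\varphi_{ij}(t)\\
&&-\sum_{C_{kl}\in N_p(i,j)}C_{ij}^{kl}(t)\Big(\int_{t-\delta_{kl}(t)}^{t}g(\varphi_{kl}(u))\Delta u\Big)\varphi_{ij}(t)+L_{ij}(t).
\end{eqnarray*}
By $(H_1)$ together with the exponential-dichotomy lemma of Section 4 applied to $\mathrm{diag}(-a_{11},\ldots,-a_{mn})$, the associated homogeneous system admits an exponential dichotomy. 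Assuming $F_{ij}(\varphi)$ is almost automorphic, the lemma on linear dynamic equations produces a unique almost automorphic solution given by \eqref{e231}, which defines the operator $\Phi:\mathbb{E}\to\mathbb{B}$.

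The second step verifies $F_{ij}(\varphi)\in AA(\mathbb{T},\mathbb{R})$ so that the previous step is legitimate. This uses the closure properties proved in Section 3: composition with the continuous functions $f$ and $g$ preserves almost automorphy; sums and products of almost automorphic functions remain almost automorphic; the shifted composite $t\mapsto\varphi_{kl}(t-\tau_{kl}(t))$ is handled by combining the almost automorphy of $\varphi_{kl}$ and $\tau_{kl}$ with the Lipschitz bound in $(H_2)$ to control the error between $\varphi_{kl}(t+s_n-\tau_{kl}(t+s_n))$ and $\bar\varphi_{kl}(t-\bar\tau_{kl}(t))$; the $\Delta$-integral $\int_{t-\delta_{kl}(t)}^{t}g(\varphi_{kl}(u))\Delta u$ is shown almost automorphic by extracting a common subsequence $(s_n)\subset\Pi$ that simultaneously realizes the limits of $\delta_{kl}$ and $g\circ\varphi_{kl}$, exploiting the uniform bound $|g|\le M^g$ from $(H_2)$ and the almost automorphy of the graininess function $\mu$ from Lemma \ref{au}.

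The third step is the pair of standard estimates on $\Phi$. Using the dichotomy bound $|X(t)PX^{-1}(\sigma(s))|\le k\,e_{\ominus\alpha}(t,\sigma(s))$ (and its symmetric $I-P$ counterpart) in \eqref{e231}, the $\Delta$-integration of the exponential kernel contributes a factor $1/\underline{a_{ij}}$. Pointwise bounds on $F_{ij}(\varphi)$ via $|f|\le M^f$, $|g|\le M^g$, $\|\varphi\|_{\mathbb{B}}\le\rho$, and the window length estimate $\overline{\delta_{kl}}$ produce exactly the expression in the first inequality of $(H_3)$, yielding $\|\Phi(\varphi)\|_{\mathbb{B}}<\rho$, so $\Phi$ is a self-map of $\mathbb{E}$. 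For the contraction estimate, splitting the nonlinear differences through $|f(u)v-f(u')v'|\le M^f|v-v'|+L^f\rho|u-u'|$ and the analogous inequality for the $g$-integral terms, the same kernel computation bounds $\|\Phi(\varphi)-\Phi(\psi)\|_{\mathbb{B}}$ by the prefactor appearing in the second inequality of $(H_3)$ times $\|\varphi-\psi\|_{\mathbb{B}}$; this prefactor is strictly less than $1$. Banach's fixed-point theorem then delivers the unique almost automorphic solution in $\mathbb{E}$.

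The main obstacle I expect is the almost-automorphy of the variable-length $\Delta$-integral term in $F_{ij}(\varphi)$: the integrand, the lower limit, and the length of the integration window all depend on $t$, and on a general almost periodic time scale in the sense of Definition \ref{def33} the translate $t+s_n$ need not lie in $\mathbb{T}$, so the convergence must be established through the $\varepsilon$-translation apparatus of Lemma \ref{aa1} rather than by a direct pointwise argument. Once this almost-automorphy is confirmed, the remaining self-mapping and contraction estimates are routine manipulations with the exponential kernel.
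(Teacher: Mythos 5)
Your proposal is correct and follows essentially the same route as the paper: freeze the nonlinearity at $\varphi$, invoke the exponential dichotomy of the diagonal system $x_{ij}^{\Delta}(t)=-a_{ij}(t)x_{ij}(t)$ together with the lemma on linear nonhomogeneous dynamic equations to obtain the solution operator via \eqref{e231}, and then verify the self-mapping and contraction estimates from the two inequalities in $(H_{3})$ before applying the Banach fixed-point theorem. If anything, you are more careful than the paper, which applies the linear lemma without explicitly checking that the frozen forcing term $F_{ij}(\varphi)$ (with its delayed arguments and variable-length $\Delta$-integral) is almost automorphic; the closure properties of Section 3 that you cite are exactly what is needed to justify that step.
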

\begin{proof}
For any given $\varphi\in\mathbb{B}$, we consider the following system
\begin{eqnarray}\label{s3}
x_{ij}^{\Delta}(t)&=&-a_{ij}(t)x_{ij}(t)-\sum\limits_{C_{kl}\in N_{r}(i,j)}B_{ij}^{kl}(t)f(\varphi_{kl}(t-\tau_{kl}(t)))\varphi_{ij}(t)\nonumber\\
&&\sum\limits_{C_{kl}\in N_{p}(i,j)}C_{ij}^{kl}(t)\int_{t-\delta_{kl}(t)}^{t}g(\varphi_{kl}(u))\Delta u \varphi_{ij}(t)+L_{ij}(t),\,\,ij=11,12,\ldots,mn.
\end{eqnarray}
Since $\min\limits_{1\leq i\leq m; 1\leq j\leq n}\{\inf\limits_{t\in\mathbb{T}}a_{ij}(t)\}>0$ and $-a_{ij}\in\mathcal{R}^{+}$, it follows from Lemma 2.12 that the linear system
\begin{eqnarray*}\label{s4}
x_{ij}^{\Delta}(t)=-a_{ij}(t)x_{ij}(t),\,\,\,\,ij=11,12,\ldots,mn
\end{eqnarray*}
admits an exponential dichotomy on $\mathbb{T}$. Thus, by Lemma 2.11, we obtain that system \eqref{s3} has exactly one almost automorphic
solution as follows
\begin{eqnarray*}
x^{\varphi}(t):=\{x_{ij}^{\varphi}(t)\}&=&\bigg\{\int_{-\infty}^{t}e_{-a_{ij}}(t,\sigma(s))
\Big(-\sum\limits_{C_{kl}\in N_{r}(i,j)}B_{ij}^{kl}(s)f(\varphi_{kl}(s-\tau_{kl}(s)))\varphi_{ij}(s)\\
&&-\sum\limits_{C_{kl}\in N_{p}(i,j)}C_{ij}^{kl}(s)\int_{t-\delta_{kl}(s)}^{s}g(\varphi_{kl}(u))\Delta u \varphi_{ij}(s)+L_{ij}(s)\Big)\Delta s\bigg\}.
\end{eqnarray*}
Now, we define the following operator $T: \mathbb{B}\rightarrow\mathbb{B}$ by setting
\begin{eqnarray*}
T(\varphi(t))=x^{\varphi}(t),\,\,\,\,\forall \varphi\in\mathbb{B}.
\end{eqnarray*}
First, we will check that for any $\varphi\in\mathbb{E}$,  $T\varphi\in\mathbb{E}$. For any $\varphi\in\mathbb{E}$, we have
\begin{eqnarray*}
\|T(\varphi)\|_{\mathbb{B}}&=&\sup\limits_{t\in\mathbb{T}}\max\limits_{i,j}\bigg\{\bigg|\int_{-\infty}^{t}e_{-a_{ij}}(t,\sigma(s))
\Big(-\sum\limits_{C_{kl}\in N_{r}(i,j)}B_{ij}^{kl}(s)f(\varphi_{kl}(s-\tau_{kl}(s)))\varphi_{ij}(s)\\
&&-\sum\limits_{C_{kl}\in N_{p}(i,j)}C_{ij}^{kl}(s)\int_{t-\delta_{kl}(s)}^{s}g(\varphi_{kl}(u))\Delta u \varphi_{ij}(s)+L_{ij}(s)\Big)\Delta s\bigg|\bigg\}\\
&\leq&\sup\limits_{t\in\mathbb{T}}\max\limits_{i,j}\bigg\{\bigg|\int_{-\infty}^{t}e_{-\underline{a_{ij}}}(t,\sigma(s))
\Big(\sum\limits_{C_{kl}\in N_{r}(i,j)}\overline{B_{ij}^{kl}}f(\varphi_{kl}(s-\tau_{kl}(s)))\varphi_{ij}(s)\\
&&+\sum\limits_{C_{kl}\in N_{p}(i,j)}\overline{C_{ij}^{kl}}\int_{t-\delta_{kl}(s)}^{s}g(\varphi_{kl}(u))\Delta u \varphi_{ij}(s)\Big)\Delta s\bigg|\bigg\}+\max\limits_{i,j}\frac{\overline{L_{ij}}}{\underline{a_{ij}}}\\
&\leq&\sup\limits_{t\in\mathbb{T}}\max\limits_{i,j}\bigg\{\int_{-\infty}^{t}e_{-\underline{a_{ij}}}(t,\sigma(s))
\Big(\sum\limits_{C_{kl}\in N_{r}(i,j)}\overline{B_{ij}^{kl}}M^{f}|\varphi_{kl}(s-\tau_{kl}(s))||\varphi_{ij}(s)|\\
&&+\sum\limits_{C_{kl}\in N_{p}(i,j)}\overline{C_{ij}^{kl}}M^{g}\int_{t-\delta_{kl}(s)}^{s}|\varphi_{kl}(u)|\Delta u |\varphi_{ij}(s)|\Big)\Delta s\bigg\}+\max\limits_{i,j}\frac{\overline{L_{ij}}}{\underline{a_{ij}}}\\
&\leq&\sup\limits_{t\in\mathbb{T}}\max\limits_{i,j}\bigg\{\int_{-\infty}^{t}e_{-\underline{a_{ij}}}(t,\sigma(s))
\Big(\sum\limits_{C_{kl}\in N_{r}(i,j)}\overline{B_{ij}^{kl}}M^{f}\rho^{2}\\
&&+\sum\limits_{C_{kl}\in N_{p}(i,j)}\overline{C_{ij}^{kl}}M^{g}\overline{\delta_{kl}}\rho^{2}\Big) \Delta s\bigg\}+\max\limits_{i,j}\frac{\overline{L_{ij}}}{\underline{a_{ij}}}\\
&\leq&\max\limits_{i,j}\bigg\{\frac{\sum\limits_{C_{kl}\in N_{r}(i,j)}\overline{B_{ij}^{kl}}M^{f}\rho^{2}
+\sum\limits_{C_{kl}\in N_{p}(i,j)}\overline{C_{ij}^{kl}}M^{g}\overline{\delta_{kl}}\rho^{2}+\overline{L_{ij}}}{\underline{a_{ij}}}
\bigg\}
\leq\rho.
\end{eqnarray*}
Therefore, we have that $\|T(\varphi)\|_{\mathbb{B}}\leq \rho$. This implies that the mapping $T$ is a self-mapping from $\mathbb{E}$ to $\mathbb{E}$. Next, we prove that the mapping $T$ is a contraction mapping on $\mathbb{E}$. In fact, for any $\varphi,\psi\in \mathbb{E}$, we can get
\begin{eqnarray*}
&&\|T(\varphi)-T(\psi)\|_{\mathbb{B}}\\
&=&\sup\limits_{t\in\mathbb{T}}\|T(\varphi)(t)-T(\psi)(t)\|\\
&=&\sup\limits_{t\in\mathbb{T}}\max\limits_{i,j}\bigg\{\bigg|\int_{-\infty}^{t}e_{-a_{ij}}(t,\sigma(s))
\Big(\sum\limits_{C_{kl}\in N_{r}(i,j)}B_{ij}^{kl}(s)\big[f(\varphi_{kl}(s-\tau_{kl}(s)))\varphi_{ij}(s)\\
&&-f(\psi_{kl}(s-\tau_{kl}(s)))\psi_{ij}(s)\big]+\sum\limits_{C_{kl}\in N_{p}(i,j)}C_{ij}^{kl}(s)\big[\int_{t-\delta_{kl}(s)}^{s}g(\varphi_{kl}(u))\Delta u \varphi_{ij}(s)\\
&&-\int_{t-\delta_{kl}(s)}^{s}g(\psi_{kl}(u))\Delta u\psi_{ij}(s)\big]\Big)\Delta s\bigg|\bigg\}\\
&\leq&\sup\limits_{t\in\mathbb{T}}\max\limits_{i,j}\bigg\{\int_{-\infty}^{t}e_{-\underline{a_{ij}}}(t,\sigma(s))
\Big(\sum\limits_{C_{kl}\in N_{r}(i,j)}\overline{B_{ij}^{kl}}\big|f(\varphi_{kl}(s-\tau_{kl}(s)))\big||\varphi_{ij}(s)-\psi_{ij}(s)|\\
&&+\sum\limits_{C_{kl}\in N_{p}(i,j)}\overline{C_{ij}^{kl}}\big|\int_{t-\delta_{kl}(s)}^{s}g(\varphi_{kl}(u))\Delta u\big||\varphi_{ij}(s)-\psi_{ij}(s)|\Big)\Delta s\bigg\}\\
&&+\sup\limits_{t\in\mathbb{T}}\max\limits_{i,j}\bigg\{\int_{-\infty}^{t}e_{-\underline{a_{ij}}}(t,\sigma(s))
\Big(\sum\limits_{C_{kl}\in N_{r}(i,j)}\overline{B_{ij}^{kl}}\big|f(\varphi_{kl}(s-\tau_{kl}(s)))-f(\psi_{kl}(s-\tau_{kl}(s)))\big|\\
&&\times|\psi_{ij}(s)|
+\sum\limits_{C_{kl}\in N_{p}(i,j)}\overline{C_{ij}^{kl}}\big|\int_{t-\delta_{kl}(s)}^{s}\big(g(\varphi_{kl}(u))-g(\psi_{kl}(u))\big)\Delta u\big||\psi_{ij}(s)|
\Big)\bigg\}\\
&\leq&\sup\limits_{t\in\mathbb{T}}\max\limits_{i,j}\bigg\{\int_{-\infty}^{t}e_{-\underline{a_{ij}}}(t,\sigma(s))
\Big(\sum\limits_{C_{kl}\in N_{r}(i,j)}\overline{B_{ij}^{kl}}M^{f}|\varphi_{kl}(s-\tau_{kl}(s))||\varphi_{ij}(s)-\psi_{ij}(s)|\\
&&+\sum\limits_{C_{kl}\in N_{p}(i,j)}\overline{C_{ij}^{kl}}\int_{t-\delta_{kl}(s)}^{s}M^{g}|\varphi_{kl}(u)|\Delta u|\varphi_{ij}(s)-\psi_{ij}(s)|\Big)\Delta s\bigg\}\\
&&+\sup\limits_{t\in\mathbb{T}}\max\limits_{i,j}\bigg\{\int_{-\infty}^{t}e_{-\underline{a_{ij}}}(t,\sigma(s))
\Big(\sum\limits_{C_{kl}\in N_{r}(i,j)}\overline{B_{ij}^{kl}}L^{f}|\varphi_{kl}(s-\tau_{kl}(s))-\psi_{kl}(s-\tau_{kl}(s))|\\
&&\times|\psi_{ij}(s)|
+\sum\limits_{C_{kl}\in N_{p}(i,j)}\overline{C_{ij}^{kl}}\int_{t-\delta_{kl}(s)}^{s}L^{g}|\varphi_{kl}(u)-\psi_{kl}(u)|\Delta u\big||\psi_{ij}(s)|
\Big)\bigg\}\\
&\leq&\sup\limits_{t\in\mathbb{T}}\max\limits_{i,j}\bigg\{\int_{-\infty}^{t}e_{-\underline{a_{ij}}}(t,\sigma(s))
\Big(\sum\limits_{C_{kl}\in N_{r}(i,j)}\overline{B_{ij}^{kl}}M^{f}\rho+\sum\limits_{C_{kl}\in N_{p}(i,j)}\overline{C_{ij}^{kl}}M^{g}\overline{\delta_{kl}}\rho\Big)\Delta s\bigg\}\|\varphi-\psi\|_{\mathbb{B}}\\
&&+\sup\limits_{t\in\mathbb{T}}\max\limits_{i,j}\bigg\{\int_{-\infty}^{t}e_{-\underline{a_{ij}}}(t,\sigma(s))
\Big(\sum\limits_{C_{kl}\in N_{r}(i,j)}\overline{B_{ij}^{kl}}L^{f}\rho+\sum\limits_{C_{kl}\in N_{p}(i,j)}\overline{C_{ij}^{kl}}L^{g}\overline{\delta_{kl}}\rho
\Big)\Delta s\bigg\}\|\varphi-\psi\|_{\mathbb{B}}\\
&\leq&\max\limits_{i,j}\bigg\{\frac{\sum\limits_{C_{kl}\in N_{r}(i,j)}\overline{B_{ij}^{kl}}(M^{f}+L^{f})\rho
+\sum\limits_{C_{kl}\in N_{p}(i,j)}\overline{C_{ij}^{kl}}(M^{g}+L^{g})\overline{\delta_{kl}}\rho}{\underline{a_{ij}}}\bigg\}\|\varphi-\psi\|_{\mathbb{B}}.
\end{eqnarray*}
By $(H_{3})$, we have that $\|T(\varphi)-T(\psi)\|_{\mathbb{B}}<\|\varphi-\psi\|_{\mathbb{B}}$. Hence, $T$ is a contraction mapping from $\mathbb{E}$ to $\mathbb{E}$. Therefore, $T$ has a fixed point in $\mathbb{E}$, that is, \eqref{s1} has a unique almost automorphic solution in $\mathbb{E}$. This completes the proof.
\end{proof}

\begin{theorem}\label{thm32}
Assume that  $(H_{1})$-$(H_{3})$ hold, then system \eqref{s1} has a unique almost automorphic solution which is globally exponentially stable.
\end{theorem}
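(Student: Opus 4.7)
The plan is to invoke Theorem \ref{thm31} for the existence and uniqueness of an almost automorphic solution $x^{*}(t)=\{x^{*}_{ij}(t)\}\in\mathbb{E}$, and then to establish its global exponential stability by a contradiction argument built on a time-scale exponential weight. Let $x(t)=\{x_{ij}(t)\}$ be any solution of \eqref{s1} with initial function $\varphi$, set $y_{ij}(t)=x_{ij}(t)-x^{*}_{ij}(t)$, and subtract the two copies of \eqref{s1}. Applying the telescoping identity $f(u)u_{1}-f(v)v_{1}=f(u)(u_{1}-v_{1})+v_{1}(f(u)-f(v))$ and its analogue for the distributed-delay term, together with $(H_{2})$ and $\|x^{*}\|_{\mathbb{B}}\leq\rho$, I would obtain
\begin{eqnarray*}
y_{ij}^{\Delta}(t)&=&-a_{ij}(t)y_{ij}(t)+F_{ij}(t),
\end{eqnarray*}
where $|F_{ij}(t)|$ is controlled by a linear combination of $|y_{ij}(t)|$, $|y_{kl}(t-\tau_{kl}(t))|$ and $\int_{t-\delta_{kl}(t)}^{t}|y_{kl}(u)|\Delta u$ whose coefficients are precisely the ingredients appearing in $(H_{3})$.

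Next I would introduce, for each index $ij$, the continuous auxiliary function
\begin{eqnarray*}
\Phi_{ij}(\lambda)&=&\underline{a_{ij}}-\lambda-\sum_{C_{kl}\in N_{r}(i,j)}\overline{B_{ij}^{kl}}\bigl(M^{f}\rho+L^{f}\rho\exp(\lambda\overline{\tau_{kl}})\bigr)\\
&&-\sum_{C_{kl}\in N_{p}(i,j)}\overline{C_{ij}^{kl}}\overline{\delta_{kl}}\bigl(M^{g}\rho+L^{g}\rho\exp(\lambda\overline{\delta_{kl}})\bigr).
\end{eqnarray*}
By $(H_{3})$, $\Phi_{ij}(0)>0$ for every $i,j$, so by continuity there exists $\lambda_{0}>0$ with $\ominus\lambda_{0}\in\mathcal{R}^{+}$ such that $\Phi_{ij}(\lambda_{0})>0$ for all $i,j$. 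I would then fix a constant $M>1$ large enough that $M e_{\ominus\lambda_{0}}(t,t_{0})\geq 1$ on $[-\theta,t_{0}]_{\mathbb{T}}$, so that the candidate bound $\|y(t)\|\leq M e_{\ominus\lambda_{0}}(t,t_{0})\|\varphi-\varphi^{*}\|_{\mathbb{B}}$ holds trivially on the initial interval.

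I would then claim that this exponential bound persists for every $t\in[t_{0},\infty)_{\mathbb{T}}$ and argue by contradiction. Suppose the bound first fails at some $t_{1}>t_{0}$ for some index $ij$. Using the variation-of-constants formula
\begin{eqnarray*}
y_{ij}(t_{1})=y_{ij}(t_{0})e_{-a_{ij}}(t_{1},t_{0})+\int_{t_{0}}^{t_{1}}e_{-a_{ij}}(t_{1},\sigma(s))F_{ij}(s)\Delta s,
\end{eqnarray*}
inserting $\|y(s)\|\leq M e_{\ominus\lambda_{0}}(s,t_{0})\|\varphi-\varphi^{*}\|_{\mathbb{B}}$ for $s<t_{1}$, and converting the delayed arguments via the elementary estimate $e_{\ominus\lambda_{0}}(s-\tau_{kl}(s),t_{0})\leq\exp(\lambda_{0}\overline{\tau_{kl}})\,e_{\ominus\lambda_{0}}(s,t_{0})$ (with the analogous estimate inside the distributed-delay integral), the positivity of $\Phi_{ij}(\lambda_{0})$ would give $|y_{ij}(t_{1})|<M e_{\ominus\lambda_{0}}(t_{1},t_{0})\|\varphi-\varphi^{*}\|_{\mathbb{B}}$, contradicting the choice of $t_{1}$. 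Uniqueness of the almost automorphic solution then follows a posteriori from the fact that any two such solutions would have to remain exponentially close on $[t_{0},\infty)_{\mathbb{T}}$.

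The main obstacle will be the technical handling of the time-scale exponential $e_{\ominus\lambda_{0}}$ under both the pointwise delay $t-\tau_{kl}(t)$ and the distributed delay $[t-\delta_{kl}(t),t]$. One must verify that the weight ratios collapse to the clean multiplicative factors $\exp(\lambda_{0}\overline{\tau_{kl}})$ and $\exp(\lambda_{0}\overline{\delta_{kl}})$ that $\Phi_{ij}$ was built to absorb; this relies on the standard estimates for $e_{\ominus\lambda_{0}}$ under $\ominus\lambda_{0}\in\mathcal{R}^{+}$, together with the fact that the graininess effects inside $e_{\ominus\lambda_{0}}(s,\sigma(s))$ are benign. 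Once this bookkeeping is in place, $(H_{3})$ supplies exactly the slack needed to close the contradiction, yielding the claimed global exponential stability of $x^{*}$.
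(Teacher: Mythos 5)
Your plan coincides with the paper's proof: existence and uniqueness come from Theorem \ref{thm31}, and stability is obtained by the same variation-of-constants plus first-failure-time contradiction argument, with your $\Phi_{ij}(\lambda)$ playing exactly the role of the paper's characteristic function $S_{ij}(\omega)$ whose positivity at $0$ is $(H_3)$ and which stays positive for small $\lambda>0$ by continuity. The only discrepancy is the ``bookkeeping'' you flag: the paper's $S_{ij}(\omega)$ carries the extra factor $\exp\big(\omega\sup_{s\in\mathbb{T}}\mu(s)\big)$ coming from $e_{\lambda}(\sigma(s),s)$, which your $\Phi_{ij}$ should also include (harmless, since it equals $1$ at $\omega=0$ so $(H_3)$ still yields $\Phi_{ij}(0)>0$, but it does require the implicit assumption that the graininess is bounded).
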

\begin{proof}
From Theorem \ref{thm31}, we see that system \eqref{s1} has at least one almost automorphic solution
$x^{\ast}(t)=\{x^{\ast}_{ij}(t)\}$ with the initial condition $\varphi^*(t)=\{\varphi_{ij}^*(t)\}$.
Suppose that
$\{x(t)\}=\{(x_{ij}^*(t)\}$
is an arbitrary solution with the initial condition $\varphi(t)=\{\varphi_{ij}(t)\}$. Set $y(t)=x(t)-x^{\ast}(t)$, then it follows from system \eqref{s1} that
\begin{eqnarray}\label{h1}
y_{ij}^{\Delta}(t)&=&-a_{ij}(t)y_{ij}(t)-\sum\limits_{C_{kl}\in N_{r}(i,j)}B_{ij}^{kl}(t)\big(f(x_{kl}(t-\tau_{kl}(t)))x_{ij}(t)\nonumber\\
&&-f(x^{\ast}_{kl}(t-\tau_{kl}(t)))x^{\ast}_{ij}(t)\big)-\sum\limits_{C_{kl}\in N_{p}(i,j)}C_{ij}^{kl}(t)\Big(\int_{t-\delta_{kl}(t)}^{t}g(x_{kl}(u))\Delta u x_{ij}(t)\nonumber\\
&&-\int_{t-\delta_{kl}(t)}^{t}g(x^{\ast}_{kl}(u))\Delta u x^{\ast}_{ij}(t)\Big)
\end{eqnarray}
for $i=1,2,\ldots,m$, $j=1,2,\ldots,n$, the initial condition of \eqref{h1} is
\begin{eqnarray*}
\psi_{ij}(s)=\varphi_{ij}(s)-\varphi^{\ast}_{ij}(s),\,\,\,\,s\in[-\theta,0]_{\mathbb{T}},\,\,\,\,ij=11,12,\ldots,mn.
\end{eqnarray*}
Then, it follows from \eqref{h1} that for $i=1,2,\ldots,m$, $j=1,2,\ldots,n$ and $t\geq t_{0}$, we have
\begin{eqnarray}\label{h0}
y_{ij}(t)&=&y_{ij}(t_{0})e_{-a_{ij}}(t,t_{0})-\int_{t_{0}}^{t}e_{-a_{ij}}(t,\sigma(s))\bigg\{\sum\limits_{C_{kl}\in N_{r}(i,j)}B_{ij}^{kl}(s)\big(f(x_{kl}(s-\tau_{kl}(s)))x_{ij}(s)\nonumber\\
&&-f(x^{\ast}_{kl}(s-\tau_{kl}(s)))x^{\ast}_{ij}(s)\big)+\sum\limits_{C_{kl}\in N_{p}(i,j)}C_{ij}^{kl}(s)\Big(\int_{t-\delta_{kl}(s)}^{s}g(x_{kl}(u))\Delta u x_{ij}(s)\nonumber\\
&&-\int_{s-\delta_{kl}(s)}^{s}g(x^{\ast}_{kl}(u))\Delta u x^{\ast}_{ij}(s)\Big)\bigg\}\Delta s.
\end{eqnarray}

Let $S_{ij}$ be defined as follows:
\begin{eqnarray*}
S_{ij}(\omega)&=&\underline{a_{ij}}-\omega-\exp(\omega\sup\limits_{s\in\mathbb{T}}\mu(s))W_{ij}(\omega),\,\, i=1,2,\ldots,m,j=1,2,\ldots,n,
\end{eqnarray*}
where
\begin{eqnarray*}
W_{ij}(\omega)&=&\Big(\sum\limits_{C_{kl}\in N_{r}(i,j)}\overline{B_{ij}^{kl}}M^{f}\rho
+\sum\limits_{C_{kl}\in N_{p}(i,j)}\overline{C_{ij}^{kl}}(M^{g}+L^{g})\overline{\delta_{kl}}\rho\exp\big\{\omega\overline{\delta_{kl}}\big\}\\
&&+\sum\limits_{C_{kl}\in N_{r}(i,j)}\overline{B_{ij}^{kl}}L^{f}\rho \exp\big\{\omega\overline{\tau_{kl}}\big\}\Big),\,\,\,i=1,2,\ldots,m, j=1,2,\ldots,n.
\end{eqnarray*}
By $(H_{3})$, we get
{\setlength\arraycolsep{2pt}\begin{eqnarray*}
S_{ij}(0)=\underline{a_{ij}}-W_{ij}(0)> 0,\,\, i=1,2,\ldots,m, j=1,2,\ldots,n.
\end{eqnarray*}}
Since $S_{ij}$ is continuous on $[0,+\infty)$ and $S_{ij}(\omega)\rightarrow -\infty$, as
$\omega\rightarrow +\infty$, so there exist $\xi_{ij} > 0$ such that
$S_{ij}(\xi_{ij})=0$ and $S_{ij}(\omega)> 0$
for $\omega\in(0,\xi_{ij})$, $i=1,2,\ldots,m$, $j=1,2,\ldots,n$.
Take
$a=\min\limits_{1\leq i\leq m, 1\leq j\leq n}\big\{\xi_{ij}\big\}$,
we have $S_{ij}(a)\geq 0, i=1,2,\ldots,m, j=1,2,\ldots,n.$ So, we can
choose a positive constant $0< \lambda <
\min\big\{a,\min\limits_{1\leq i \leq m, 1\leq j\leq n}\{\underline{a_{ij}}\}\big\}$ such
that
\[
S_{ij}(\lambda)>0, \quad i=1,2,\ldots,m, j=1,2,\ldots,n,
\]
which implies that
\begin{eqnarray}\label{h2}
\frac{\exp(\lambda\sup\limits_{s\in\mathbb{T}}\mu(s))W_{ij}(\lambda)}{\underline{a_{ij}}-\lambda}< 1,\,\,i=1,2,\ldots,m, j=1,2,\ldots,n.
\end{eqnarray}
Let
\begin{eqnarray*}
M=\max\limits_{1\leq i \leq m, 1\leq j\leq n}\bigg\{\frac{\underline{a_{ij}}}{{W_{ij}(0)}}\bigg\},
\end{eqnarray*}
by $(H_{3})$ and \eqref{h2}, we have $M>1$.

Moreover, we have $e_{\ominus\lambda}(t,t_{0})>1$, where $t\leq t_{0}$. Hence, it is obvious that
\begin{eqnarray*}
\|y(t)\|_{\mathbb{B}}\leq Me_{\ominus\lambda}(t,t_{0})\|\psi\|_{\mathbb{B}}, \qquad \forall
t\in[t_{0}-\theta,t_{0}]_{\mathbb{T}},
\end{eqnarray*}
where $\lambda\in\mathcal{R}^{+}$. In the following , we will show that
\begin{eqnarray}\label{h4}
\|y(t)\|_{\mathbb{B}}\leq Me_{\ominus\lambda}(t,t_{0})\|\psi\|_{\mathbb{B}}, \quad
\forall t\in(t_{0},+\infty)_{\mathbb{T}}.
\end{eqnarray}
To prove \eqref{h4}, we first show that for any $p>1$, the following
inequality holds:
\begin{eqnarray}\label{h5}
\|y(t)\|_{\mathbb{B}}< p Me_{\ominus\lambda}(t,t_{0})\|\psi\|_{\mathbb{B}}, \quad
\forall t\in(t_{0},+\infty)_{\mathbb{T}},
\end{eqnarray}
which implies that, for $i=1,2,\ldots,m$, $j=1,2,\ldots,n$, we have
\begin{eqnarray}\label{h6}
|y_{ij}(t)|< p Me_{\ominus\lambda}(t,t_{0})\|\psi\|_{\mathbb{B}}, \quad
\forall t\in(t_{0},+\infty)_{\mathbb{T}}.
\end{eqnarray}
If \eqref{h6} is not true, then there exists  $t_{1}\in(t_{0},+\infty)_{\mathbb{T}}$ such that
\begin{eqnarray*}
&&|y_{ij}(t_{1})|\geq p Me_{\ominus\lambda}(t_{1},t_{0})\|\psi\|_{\mathbb{B}},\\
&&|y_{ij}(t)|<p Me_{\ominus\lambda}(t,t_{0})\|\psi\|_{\mathbb{B}}, \quad
\forall t\in(t_{0},t_{1})_{\mathbb{T}}.
\end{eqnarray*}
Therefore, there must exist a constant $c\geq 1$ such that
\begin{eqnarray*}
&&|y_{ij}(t_{1})|=cp M e_{\ominus\lambda}(t_1,t_{0})\|\psi\|_{\mathbb{B}},\\
&&|y_{ij}(t)|<cp Me_{\ominus\lambda}(t,t_{0})\|\psi\|_{\mathbb{B}}, \quad
\forall t\in(t_{0},t_{1})_{\mathbb{T}}.
\end{eqnarray*}
Note that, in view of \eqref{h0}, we have that
\begin{eqnarray*}
|y_{ij}(t_{1})|
&=&\bigg|y_{ij}(t_{0})e_{-a_{ij}}(t_{1},t_{0})-\int_{t_{0}}^{t_{1}}e_{-a_{ij}}(t_{1},\sigma(s))\bigg\{\sum\limits_{C_{kl}\in N_{r}(i,j)}B_{ij}^{kl}(s)\big(f(x_{kl}(s-\tau_{kl}(s)))x_{ij}(s)\\
&&-f(x^{\ast}_{kl}(s-\tau_{kl}(s)))x^{\ast}_{ij}(s)\big)+\sum\limits_{C_{kl}\in N_{p}(i,j)}C_{ij}^{kl}(s)\Big(\int_{s-\delta_{kl}(s)}^{s}g(x_{kl}(u))\Delta u x_{ij}(s)\\
&&-\int_{s-\delta_{kl}(s)}^{s}g(x^{\ast}_{kl}(u))\Delta u x^{\ast}_{ij}(s)\Big)\bigg\}\Delta s\bigg|\\
&\leq&e_{-a_{ij}}(t_{1},t_{0})\|\psi\|_{\mathbb{B}}+\int_{t_{0}}^{t_{1}}e_{-a_{ij}}(t_{1},\sigma(s))\bigg\{\sum\limits_{C_{kl}\in N_{r}(i,j)}\overline{B_{ij}^{kl}}\big|f(x_{kl}(s-\tau_{kl}(s)))x_{ij}(s)\\
&&-f(x^{\ast}_{kl}(s-\tau_{kl}(s)))x^{\ast}_{ij}(s)\big|+\sum\limits_{C_{kl}\in N_{p}(i,j)}\overline{C_{ij}^{kl}}\Big|\int_{s-\delta_{kl}(s)}^{s}g(x_{kl}(u))\Delta u x_{ij}(s)\\
&&-\int_{s-\delta_{kl}(s)}^{s}g(x^{\ast}_{kl}(u))\Delta u x^{\ast}_{ij}(s)\Big|\bigg\}\Delta s\\
&\leq&e_{-a_{ij}}(t_{1},t_{0})\|\psi\|_{\mathbb{B}}+\int_{t_{0}}^{t_{1}}e_{-a_{ij}}(t_{1},\sigma(s))\bigg\{\sum\limits_{C_{kl}\in N_{r}(i,j)}\overline{B_{ij}^{kl}}M^{f}|x_{kl}(s-\tau_{kl}(s))||y_{ij}(s)|\\
&&+\sum\limits_{C_{kl}\in N_{p}(i,j)}\overline{C_{ij}^{kl}}\int_{s-\delta_{kl}(s)}^{s}M^{g}|x_{kl}(u)|\Delta u|y_{ij}(s)|\\
&&+\sum\limits_{C_{kl}\in N_{r}(i,j)}\overline{B_{ij}^{kl}}L^{f}|y_{kl}(s-\tau_{kl}(s))||x^{\ast}_{ij}(s)|\\
&&+\sum\limits_{C_{kl}\in N_{p}(i,j)}\overline{C_{ij}^{kl}}\int_{s-\delta_{kl}(s)}^{s}L^{g}|y_{kl}(u)|\Delta u\big||x^{\ast}_{ij}(s)|
\Big)\bigg\}\Delta s\\
&\leq&e_{-a_{ij}}(t_{1},t_{0})\|\psi\|_{\mathbb{B}}+cpM e_{\ominus \lambda}(t_{1},t_{0})\|\psi\|_{\mathbb{B}}\bigg|\int_{t_{0}}^{t_{1}}e_{-a_{ij}\oplus\lambda}(t_{1},\sigma(s))\\
&&\times\Big\{\sum\limits_{C_{kl}\in N_{r}(i,j)}\overline{B_{ij}^{kl}}M^{f}\rho e_{\lambda}(\sigma(s),s)
+\sum\limits_{C_{kl}\in N_{p}(i,j)}\overline{C_{ij}^{kl}}M^{g}\overline{\delta_{kl}}\rho e_{\lambda}(\sigma(s),s-\delta_{kl}(s))\\
&&+\sum\limits_{C_{kl}\in N_{r}(i,j)}\overline{B_{ij}^{kl}}L^{f}\rho e_{\lambda}(\sigma(s),s-\tau_{kl}(s))\\
&&+\sum\limits_{C_{kl}\in N_{p}(i,j)}\overline{C_{ij}^{kl}}L^{g}\overline{\delta_{kl}}\rho e_{\lambda}(\sigma(s),s-\delta_{kl}(s))\big|
\Big\}\bigg|\Delta s\\
&\leq&e_{-a_{ij}}(t_{1},t_{0})\|\psi\|_{\mathbb{B}}+cpM e_{\ominus \lambda}(t_{1},t_{0})\|\psi\|_{\mathbb{B}}\bigg|\int_{t_{0}}^{t_{1}}e_{-a_{ij}\oplus\lambda}(t_{1},\sigma(s))\\
&&\times\Big\{\sum\limits_{C_{kl}\in N_{r}(i,j)}\overline{B_{ij}^{kl}}M^{f}\rho\exp\big\{\lambda\sup\limits_{s\in\mathbb{T}}\mu(s)\big\}\\
&&+\sum\limits_{C_{kl}\in N_{p}(i,j)}\overline{C_{ij}^{kl}}M^{g}\overline{\delta_{kl}}\rho
\exp\big\{\lambda(\overline{\delta_{kl}}+\sup\limits_{s\in\mathbb{T}}\mu(s))\big\}\\
&&+\sum\limits_{C_{kl}\in N_{r}(i,j)}\overline{B_{ij}^{kl}}L^{f}\rho \exp\big\{\lambda(\overline{\tau_{kl}}+\sup\limits_{s\in\mathbb{T}}\mu(s))\big\}\\
&&+\sum\limits_{C_{kl}\in N_{p}(i,j)}\overline{C_{ij}^{kl}}L^{g}\overline{\delta_{kl}}\rho
\exp\big\{\lambda(\overline{\delta_{kl}}+\sup\limits_{s\in\mathbb{T}}\mu(s))\big\}
\Big\}\bigg|\Delta s\\
&=&cpM e_{\ominus \lambda}(t_{1},t_{0})\|\psi\|_{\mathbb{B}}\bigg\{\frac{1}{cpM}e_{-a_{ij}\oplus\lambda}(t_{1},t_{0})
+\exp\big\{\lambda\sup\limits_{s\in\mathbb{T}}\mu(s)\big\}\Big[\sum\limits_{C_{kl}\in N_{r}(i,j)}\overline{B_{ij}^{kl}}M^{f}\rho\\
&&+\sum\limits_{C_{kl}\in N_{p}(i,j)}\overline{C_{ij}^{kl}}(M^{g}+L^{g})\overline{\delta_{kl}}\rho\exp\big\{\lambda\overline{\delta_{kl}}\big\}
+\sum\limits_{C_{kl}\in N_{r}(i,j)}\overline{B_{ij}^{kl}}L^{f}\rho \exp\big\{\lambda\overline{\tau_{kl}}\big\}\Big]\\
&&\times\int_{t_{0}}^{t_{1}}e_{-a_{ij}\oplus\lambda}(t_{1},\sigma(s))\Delta s\\
&<&cpM e_{\ominus \lambda}(t_{1},t_{0})\|\psi\|_{\mathbb{B}}\bigg\{\frac{1}{M}e_{-(\underline{a_{ij}}-\lambda)}(t_{1},t_{0})
+\exp\big\{\lambda\sup\limits_{s\in\mathbb{T}}\mu(s)\big\}\Big[\sum\limits_{C_{kl}\in N_{r}(i,j)}\overline{B_{ij}^{kl}}M^{f}\rho\\
&&+\sum\limits_{C_{kl}\in N_{p}(i,j)}\overline{C_{ij}^{kl}}(M^{g}+L^{g})\overline{\delta_{kl}}\rho\exp\big\{\lambda\overline{\delta_{kl}}\big\}
+\sum\limits_{C_{kl}\in N_{r}(i,j)}\overline{B_{ij}^{kl}}L^{f}\rho \exp\big\{\lambda\overline{\tau_{kl}}\big\}\Big]\\
&&\times\frac{1}{-(\underline{a_{ij}}-\lambda)}\int_{t_{0}}^{t_{1}}(-(\underline{a_{ij}}-\lambda))
e_{-(\underline{a_{ij}}-\lambda)}(t_{1},\sigma(s))\Delta s\\
&=&cpM e_{\ominus \lambda}(t_{1},t_{0})\|\psi\|_{\mathbb{B}}\bigg\{\frac{1}{M}
-\frac{\exp\big\{\lambda\sup\limits_{s\in\mathbb{T}}\mu(s)\big\}}{\underline{a_{ij}}-\lambda}\Big(\sum\limits_{C_{kl}\in N_{r}(i,j)}\overline{B_{ij}^{kl}}M^{f}\rho\\
&&+\sum\limits_{C_{kl}\in N_{p}(i,j)}\overline{C_{ij}^{kl}}(M^{g}+L^{g})\overline{\delta_{kl}}\rho\exp\big\{\lambda\overline{\delta_{kl}}\big\}
+\sum\limits_{C_{kl}\in N_{r}(i,j)}\overline{B_{ij}^{kl}}L^{f}\rho \exp\big\{\lambda\overline{\tau_{kl}}\big\}\Big)\\
&&\times e_{-(\underline{a_{ij}}-\lambda)}(t_{1},t_{0})
+\frac{\exp\big\{\lambda\sup\limits_{s\in\mathbb{T}}\mu(s)\big\}}{\underline{a_{ij}}-\lambda}\Big(\sum\limits_{C_{kl}\in N_{r}(i,j)}\overline{B_{ij}^{kl}}M^{f}\rho\\
&&+\sum\limits_{C_{kl}\in N_{p}(i,j)}\overline{C_{ij}^{kl}}(M^{g}+L^{g})\overline{\delta_{kl}}\rho\exp\big\{\lambda\overline{\delta_{kl}}\big\}
+\sum\limits_{C_{kl}\in N_{r}(i,j)}\overline{B_{ij}^{kl}}L^{f}\rho \exp\big\{\lambda\overline{\tau_{kl}}\big\}\Big)\bigg\}\\
&<&cpM e_{\ominus \lambda}(t_{1},t_{0})\|\psi\|_{\mathbb{B}},
\end{eqnarray*}
which is a contradiction. Therefore, \eqref{h5} holds. Letting $p\rightarrow
1$, then \eqref{h4} holds. Hence, we have that
\begin{eqnarray*}
\|y(t)\|_{\mathbb{B}}< Me_{\ominus\lambda}(t,t_{0})\|\psi\|_{\mathbb{B}}, \quad
\forall t\in(t_{0},+\infty)_{\mathbb{T}},
\end{eqnarray*}
which means that  the almost automorphic solution of system \eqref{s1} is globally exponentially stable. The proof is complete.
\end{proof}

\begin{remark}
Theorems \ref{thm31} and \ref{thm32} are new even for the both cases of differential equations $(\mathbb{T}=\mathbb{R})$  and difference equations $(\mathbb{T}=\mathbb{Z})$.
\end{remark}

\section{ An example}
 \setcounter{equation}{0}

 \indent

In this section, we will give  examples to illustrate the feasibility
and effectiveness of our results obtained in Sections 5.

Consider the following SICNNs with time-varying delays:
\begin{eqnarray}\label{g1}
x_{ij}^{\Delta}(t)&=&-a_{ij}(t)x_{ij}(t)-\sum\limits_{C_{kl}\in N_{r}(i,j)}B_{ij}^{kl}(t)f(x_{kl}(t-\tau_{kl}(t)))x_{ij}(t)\nonumber\\
&&-\sum\limits_{C_{kl}\in N_{p}(i,j)}C_{ij}^{kl}(t)\int_{t-\delta_{kl}(t)}^{t}g(x_{kl}(u))\Delta u x_{ij}(t)+L_{ij}(t),
\end{eqnarray}
where $i=1,2,3$, $j=1,2,3$, $r=p=1$, $f(x)=0.05|\cos x|$, $g(x)=\frac{|x|}{20}$, $t\in\mathbb{T}$.
\begin{example}
If $\mathbb{T}=\mathbb{R}$, then $\mu(t)\equiv0$. Take
\[
\left(
\begin{array}{cccc}
a_{11}(t) & a_{12}(t) & a_{13}(t)\\
a_{21}(t) & a_{22}(t) & a_{23}(t)\\
a_{31}(t) & a_{32}(t) & a_{33}(t)\\
\end{array}\right)=\left(
\begin{array}{cccc}
3+|\sin t| & 4+|\cos \sqrt{2}t| & 2+|\sin t|\\
2+|\cos(\frac{1}{3} t)| & 5+|\sin 2t| & 1+|\cos 2t|\\
4+|\cos2t| & 3+|\sin 2t| & 2+|\cos t|\\
\end{array}\right),\]
\begin{eqnarray*}
\left(
\begin{array}{cccc}
B_{11}(t) & B_{12}(t) & B_{13}(t)\\
B_{21}(t) & B_{22}(t) & B_{23}(t)\\
B_{31}(t) & B_{32}(t) & B_{33}(t)\\
\end{array}\right)&=&\left(
\begin{array}{cccc}
C_{11}(t) & C_{12}(t) & C_{13}(t)\\
C_{21}(t) & C_{22}(t) & C_{23}(t)\\
C_{31}(t) & C_{32}(t) & C_{33}(t)\\
\end{array}\right)\\&=&
\left(
\begin{array}{cccc}
0.1|\cos t| & 0.2|\cos 2t| & 0.1|\sin t|\\
0.1|\sin 2t| & 0.1|\cos t| & 0.2|\cos 2t|\\
0.1|\cos t| & 0.2|\cos 2t| & 0.1|\cos t|\\
\end{array}\right),
\end{eqnarray*}
\[
\left(
\begin{array}{cccc}
L_{11}(t) & L_{12}(t) & L_{13}(t)\\
L_{21}(t) & L_{22}(t) & L_{23}(t)\\
L_{31}(t) & L_{32}(t) & L_{33}(t)\\
\end{array}\right)=\left(
\begin{array}{cccc}
0.1\sin t & \cos t & 0.2\sin t\\
0.2\sin t & 0.4\sin t & 0.1\sin t\\
0.1\sin t & 0.3\sin t & 0.1\cos t\\
\end{array}\right),\]
\[
\left(
\begin{array}{cccc}
\delta_{11}(t) & \delta_{12}(t) & \delta_{13}(t)\\
\delta_{21}(t) & \delta_{22}(t) & \delta_{23}(t)\\
\delta_{31}(t) & \delta_{32}(t) & \delta_{33}(t)\\
\end{array}\right)=\left(
\begin{array}{cccc}
0.5+0.5\sin t & \cos t & 0.7+0.2\sin t\\
0.8+0.2\sin t & 0.2+0.4\sin t & 0.7+0.2\sin t\\
0.9+0.1\sin t & 0.6+0.3\sin t & 0.4+0.4\cos t\\
\end{array}\right).\]
Obviously, $(H_{1})$ holds. Clearly, we have
\[M^{f}=M^{g}=0.05,\,\,\,L^{f}=L^{g}=0.05,\,\,\,
\sum\limits_{C_{kl}\in N_{1}(1,1)}\overline{B_{11}^{kl}}=\sum\limits_{C_{kl}\in N_{1}(1,1)}\overline{C_{11}^{kl}}=0.5,
\]
\[\sum\limits_{C_{kl}\in N_{1}(1,2)}\overline{B_{12}^{kl}}=\sum\limits_{C_{kl}\in N_{1}(1,2)}\overline{C_{12}^{kl}}=1,\,\,\,
\sum\limits_{C_{kl}\in N_{1}(1,3)}\overline{B_{13}^{kl}}=\sum\limits_{C_{kl}\in N_{1}(1,3)}\overline{C_{13}^{kl}}=0.6,
\]
\[\sum\limits_{C_{kl}\in N_{1}(2,1)}\overline{B_{21}^{kl}}=\sum\limits_{C_{kl}\in N_{1}(2,1)}\overline{C_{21}^{kl}}=0.8,\,\,\,
\sum\limits_{C_{kl}\in N_{1}(2,2)}\overline{B_{22}^{kl}}=\sum\limits_{C_{kl}\in N_{1}(2,2)}\overline{C_{22}^{kl}}=1.2,
\]
\[\sum\limits_{C_{kl}\in N_{1}(2,3)}\overline{B_{23}^{kl}}=\sum\limits_{C_{kl}\in N_{1}(2,3)}\overline{C_{23}^{kl}}=1,\,\,\,
\sum\limits_{C_{kl}\in N_{1}(3,1)}\overline{B_{31}^{kl}}=\sum\limits_{C_{kl}\in N_{1}(3,1)}\overline{C_{31}^{kl}}=0.5,
\]
\[\sum\limits_{C_{kl}\in N_{1}(3,2)}\overline{B_{32}^{kl}}=\sum\limits_{C_{kl}\in N_{1}(3,2)}\overline{C_{32}^{kl}}=0.8,\,\,\,
\sum\limits_{C_{kl}\in N_{1}(3,3)}\overline{B_{33}^{kl}}=\sum\limits_{C_{kl}\in N_{1}(3,3)}\overline{C_{33}^{kl}}=0.6
\]
and we can easily check that $\max\limits_{i,j}\frac{\overline{L_{ij}}}{\underline{a_{ij}}}=0.25$. Take $\rho=1$, then
\[\max\limits_{i,j}\bigg\{\frac{\sum\limits_{C_{kl}\in N_{r}(i,j)}\overline{B_{ij}^{kl}}M^{f}\rho^{2}
+\sum\limits_{C_{kl}\in N_{p}(i,j)}\overline{C_{ij}^{kl}}M^{g}\rho^{2}\overline{\delta_{kl}}+\overline{L_{ij}}}{\underline{a_{ij}}}
\bigg\}\approx 0.2669<\rho=1\]
and
\[\max\limits_{i,j}\bigg\{\frac{\sum\limits_{C_{kl}\in N_{r}(i,j)}\overline{B_{ij}^{kl}}(M^{f}+L^{f})\rho
+\sum\limits_{C_{kl}\in N_{p}(i,j)}\overline{C_{ij}^{kl}}(M^{g}+L^{g})\rho\overline{\delta_{kl}}}{\underline{a_{ij}}}\bigg\}\approx0.5337<1.\]
The combination of the above two inequalities means that $(H_{3})$ is satisfied for $\rho=1$.

Therefore, we have shown that assumptions $(H_{1})$-$(H_{3})$ are satisfied. By Theorem \ref{thm31}, system \eqref{g1} has exactly one almost automorphic solution in $\mathbb{E}=\{\varphi\in \mathbb{B}:\|\varphi\|_{\mathbb{B}}\leq \rho\}$. Moreover, by Theorem \ref{thm32}  this solution is globally exponentially stable.
\end{example}

\begin{example}
If $\mathbb{T}=\mathbb{Z}$, then $\mu(t)\equiv1$. Take
\[
\left(
\begin{array}{cccc}
a_{11}(t) & a_{12}(t) & a_{13}(t)\\
a_{21}(t) & a_{22}(t) & a_{23}(t)\\
a_{31}(t) & a_{32}(t) & a_{33}(t)\\
\end{array}\right)=\left(
\begin{array}{cccc}
0.3+0.1|\sin t| & 0.4+0.1|\cos \sqrt{2}t| & 0.2+0.1|\sin t|\\
0.2+0.2|\cos(\frac{1}{3} t)| & 0.5+0.1|\sin 2t| & 0.1+0.1|\cos 2t|\\
0.4+0.1|\cos2t| & 0.3+0.2|\sin 2t| & 0.2+0.1|\cos t|\\
\end{array}\right),\]
\begin{eqnarray*}
\left(
\begin{array}{cccc}
B_{11}(t) & B_{12}(t) & B_{13}(t)\\
B_{21}(t) & B_{22}(t) & B_{23}(t)\\
B_{31}(t) & B_{32}(t) & B_{33}(t)\\
\end{array}\right)&=&\left(
\begin{array}{cccc}
C_{11}(t) & C_{12}(t) & C_{13}(t)\\
C_{21}(t) & C_{22}(t) & C_{23}(t)\\
C_{31}(t) & C_{32}(t) & C_{33}(t)\\
\end{array}\right)\\&=&
\left(
\begin{array}{cccc}
0.02|\cos t| & 0.02|\cos 2t| & 0.03|\sin t|\\
0.01|\sin 2t| & 0.01|\cos t| & 0.02|\cos 2t|\\
0.03|\cos t| & 0.02|\cos 2t| & 0.01|\cos t|\\
\end{array}\right),
\end{eqnarray*}
\[
\left(
\begin{array}{cccc}
L_{11}(t) & L_{12}(t) & L_{13}(t)\\
L_{21}(t) & L_{22}(t) & L_{23}(t)\\
L_{31}(t) & L_{32}(t) & L_{33}(t)\\
\end{array}\right)=\left(
\begin{array}{cccc}
0.01\sin t & 0.01\cos t & 0.02\sin t\\
0.02\sin t & 0.04\sin t & 0.03\sin t\\
0.01\sin t & 0.03\sin t & 0.02\cos t\\
\end{array}\right),\]
\[
\left(
\begin{array}{cccc}
\delta_{11}(t) & \delta_{12}(t) & \delta_{13}(t)\\
\delta_{21}(t) & \delta_{22}(t) & \delta_{23}(t)\\
\delta_{31}(t) & \delta_{32}(t) & \delta_{33}(t)\\
\end{array}\right)=\left(
\begin{array}{cccc}
\sin t & 0.9\cos t & 0.2+0.5\sin t\\
0.2+0.8\sin t & 0.4+0.6\sin t & 0.1+0.9\sin t\\
\sin t & \sin t & 0.7+0.2\cos t\\
\end{array}\right).\]
Obviously, $(H_{1})$ holds. Clearly, we have
\[M^{f}=M^{g}=0.05,\,\,\,L^{f}=L^{g}=0.05,\,\,\,
\sum\limits_{C_{kl}\in N_{1}(1,1)}\overline{B_{11}^{kl}}=\sum\limits_{C_{kl}\in N_{1}(1,1)}\overline{C_{11}^{kl}}=0.06,
\]
\[\sum\limits_{C_{kl}\in N_{1}(1,2)}\overline{B_{12}^{kl}}=\sum\limits_{C_{kl}\in N_{1}(1,2)}\overline{C_{12}^{kl}}=0.11,\,\,\,
\sum\limits_{C_{kl}\in N_{1}(1,3)}\overline{B_{13}^{kl}}=\sum\limits_{C_{kl}\in N_{1}(1,3)}\overline{C_{13}^{kl}}=0.08,
\]
\[\sum\limits_{C_{kl}\in N_{1}(2,1)}\overline{B_{21}^{kl}}=\sum\limits_{C_{kl}\in N_{1}(2,1)}\overline{C_{21}^{kl}}=0.11,\,\,\,
\sum\limits_{C_{kl}\in N_{1}(2,2)}\overline{B_{22}^{kl}}=\sum\limits_{C_{kl}\in N_{1}(2,2)}\overline{C_{22}^{kl}}=0.17,
\]
\[\sum\limits_{C_{kl}\in N_{1}(2,3)}\overline{B_{23}^{kl}}=\sum\limits_{C_{kl}\in N_{1}(2,3)}\overline{C_{23}^{kl}}=0.11,\,\,\,
\sum\limits_{C_{kl}\in N_{1}(3,1)}\overline{B_{31}^{kl}}=\sum\limits_{C_{kl}\in N_{1}(3,1)}\overline{C_{31}^{kl}}=0.07,
\]
\[\sum\limits_{C_{kl}\in N_{1}(3,2)}\overline{B_{32}^{kl}}=\sum\limits_{C_{kl}\in N_{1}(3,2)}\overline{C_{32}^{kl}}=0.1,\,\,\,
\sum\limits_{C_{kl}\in N_{1}(3,3)}\overline{B_{33}^{kl}}=\sum\limits_{C_{kl}\in N_{1}(3,3)}\overline{C_{33}^{kl}}=0.06
\]
and we can easily check that $\max\limits_{i,j}\frac{\overline{L_{ij}}}{\underline{a_{ij}}}=0.15$. Take $\rho=1$, then
\[\max\limits_{i,j}\bigg\{\frac{\sum\limits_{C_{kl}\in N_{r}(i,j)}\overline{B_{ij}^{kl}}M^{f}\rho^{2}
+\sum\limits_{C_{kl}\in N_{p}(i,j)}\overline{C_{ij}^{kl}}M^{g}\rho^{2}\overline{\delta_{kl}}+\overline{L_{ij}}}{\underline{a_{ij}}}
\bigg\}\approx 0.1904<\rho=1\]
and
\[\max\limits_{i,j}\bigg\{\frac{\sum\limits_{C_{kl}\in N_{r}(i,j)}\overline{B_{ij}^{kl}}(M^{f}+L^{f})\rho
+\sum\limits_{C_{kl}\in N_{p}(i,j)}\overline{C_{ij}^{kl}}(M^{g}+L^{g})\rho\overline{\delta_{kl}}}{\underline{a_{ij}}}\bigg\}\approx0.3657<1.\]
The combination of the above two inequalities means that $(H_{3})$ is satisfied for $\rho=1$.

Therefore, we have shown that assumptions $(H_{1})$-$(H_{3})$ are satisfied. By Theorem \ref{thm31}, system \eqref{g1} has exactly one almost automorphic solution in $\mathbb{E}=\{\varphi\in \mathbb{B}:\|\varphi\|_{\mathbb{B}}\leq \rho\}$. Moreover, by Theorem \ref{thm32} this solution is globally exponentially stable.
 \end{example}

\section{Conclusion}
\indent

In this paper,  first a new concept of almost periodic time scales and    a new definition of almost
automorphic functions on almost periodic time scales are proposed and   some basic   properties of them are studied. Two open problems concerning the relationship between the algebraic operation property of elements of a time scale and
analytical property of the time scale are proposed.
Then,  based on these concepts and  results,   the existence of an almost automorphic solution for both the linear nonhomogeneous dynamic equation on time scales and its associated homogeneous equation is established.
Finally, as an application of the results,   the existence and global exponential stability of almost automorphic solutions to a class of shunting inhibitory cellular neural networks  with time-varying delays on time scales are obtained.

\end{document}